\DeclareRobustCommand{\pdot}{\mathbin{\mathpalette\pdot@\relax}}
\newcommand{\pdot@}[2]{%
  \ooalign{%
    $\m@th#1\circ$\cr
    \hidewidth$\m@th#1\cdot$\hidewidth\cr
  }%
}
\newcommand{\ord}{\operatorname{ord}}
\renewcommand{\|}{\big |}
\newcommand{\texpdf}[2]{\texorpdfstring{#1}{#2}}
\def\Z{\mathbb{Z}}
\def\Q{\mathbb{Q}}
\def\H{\mathbb{H}}
\def\C{\mathbb{C}}
\def\SL{{\rm SL}}
\def\GL{{\rm GL}}
\newcommand{\pfrac}[2]{\left(\frac{#1}{#2}\right)}
\newcommand{\ptfrac}[2]{\left(\tfrac{#1}{#2}\right)}
\newcommand{\pMatrix}[4]{\left(\begin{matrix}#1 & #2 \\ #3 & #4\end{matrix}\right)}
 \newcommand{\spmatrix}[4]{\left(\begin{smallmatrix}#1 & #2 \\ #3 & #4\end{smallmatrix}\right)}
\renewcommand{\bar}[1]{\overline{#1}}
\renewcommand{\tilde}{\widetilde}
\renewcommand{\sl}{\big| }
\def\ep{\varepsilon}
\newtheorem{theorem}{Theorem}[section]
\newtheorem{lemma}[theorem]{Lemma}
\newtheorem{proposition}[theorem]{Proposition}
\theoremstyle{remark}
\newtheorem*{remark}{Remark}
\numberwithin{equation}{section}
\def\cpm{c\phi_m}
\def\cp13{c\phi_{13}}
\title{Arithmetic properties of generalized Frobenius partitions}
\date{\today}
\author{Scott Ahlgren}
\address{Department of Mathematics\\
University of Illinois\\
Urbana, IL 61801} 
\email{sahlgren@illinois.edu} 
\author{Cruz Castillo}
\address{Department of Mathematics\\
University of Illinois\\
Urbana, IL 61801} 
\email{ccasti30@illinois.edu}
\thanks{Scott Ahlgren was  partially supported by  grant \#963004 from the Simons Foundation.
Cruz Castillo was partially supported by the Alfred P. Sloan Foundation’s MPHD Program and by the National Science Foundation Graduate Research Fellowship grant DGE 21-46756.
}
\begin{document}
\begin{abstract}
Ramanujan proved three famous congruences for the partition function modulo 5, 7, and 11. The first author and Boylan proved that these congruences are the only ones of this type.   In 1984 Andrews introduced the $m$-colored Frobenius partition functions 
$\cpm$; these are natural higher-level analogues of the partition function which have attracted a great deal of attention in the ensuing decades.
For each $m\in \{5, 7, 11\}$ there are two analogues of Ramanujan's  congruences for $\cpm$, and 
for these   $m$ we prove there are no congruences like Ramanujan's other than these six.
Our methods involve a blend of theory  and computation with modular forms.
\end{abstract}

\maketitle

\setcounter{tocdepth}{2}
\section{Introduction}

Andrews \cite{Andrews} introduced the $m$-colored generalized Frobenius partition functions $\cpm(n)$ 
(whose complete definition is given in Section~\ref{sec:background and preliminary results}) and developed many of their properties. 
The functions $\cpm(n)$ are natural higher-level analogues of the ordinary partition function $p(n)$.
For example we have 
 $c\phi_1(n)=p(n)$ \cite[$(5.15)$]{Andrews}, and for odd $m$ the generating function for $\cpm$ is a modular form of level $m$ and weight $-1/2$ (see \cite{Chan-Wang-Yang} or \eqref{eq:Amdef} below); this generalizes the relationship between the generating function for $p(n)$ and the Dedekind eta function.

Chan, Wang and Yang  \cite{Chan-Wang-Yang} proved a number of modular properties of the generating functions $\sum \cpm(n)q^n$ and studied these functions in detail for $m\leq 17$; we refer the reader to this paper for a more complete description of previous research on this topic.

The functions $\cpm$  enjoy many of the famous congruence properties satisfied by the partition function.   In a recent paper \cite{AAD:frob} the first author, Andersen and Dicks showed\footnote{The hypotheses can be relaxed; see \cite{AAD:frob}.} that if $m\geq 5$
is prime and $\ell$ is a prime with $\ell>m^2$
 then there are infinitely many congruences
 of the form 
 \begin{equation}
 c\phi_m\left(\frac{\ell p^2n+m}{24}\right) \equiv 0 \pmod\ell  \ \ \ \text{ if }
\ \ \ \pfrac{n}{p}=
\pfrac{-1}{p}^{\frac{m\ell+1}{2}}\pfrac{p}{m},
\end{equation}
 where $p\equiv 1\pmod\ell$ is prime.
Examples of such congruences with $\ell\leq 31$ for $p(n)=c\phi_1(n)$ were
discovered in the 1960s by Atkin, and the existence of infinitely many such congruences in this case was proved by the first author, Allen and Tang in 
 \cite{Ahlgren-Allen-Tang}.

For $c\phi_1(n)=p(n)$ we also have the three famous  congruences of Ramanujan:
\begin{gather} \label{eq:ram-cong}
    p\pfrac{\ell n+1}{24} \equiv 0 \pmod{\ell}, \qquad \ell=5,7,11.
\end{gather}
A beautifully symmetric family of 
analogous congruences can be proved for $m\in \{5, 7, 11\}$ using results from \cite{Chan-Wang-Yang} (see \cite[\S 1]{AAD:frob} for details). In particular
we have
\mathtoolsset{showonlyrefs=false}
\begin{align} \label{eq:ram-cong5}
    c\phi_5\pfrac{\ell n+5}{24} &\equiv 0 \pmod{\ell}, \qquad \ell=7,11;\\
 \label{eq:ram-cong7}
    c\phi_7\pfrac{\ell n+7}{24} &\equiv 0 \pmod{\ell}, \qquad \ell=5, 11;\\
\label{eq:ram-cong11}
    c\phi_{11}\pfrac{\ell n+11}{24} &\equiv 0 \pmod{\ell}, \qquad \ell=5, 7.
\end{align}
\mathtoolsset{showonlyrefs=true}
Work of the  first author and Boylan \cite{Ahlgren-Boylan} together with a result of Kiming and Olsson \cite{KimingOlsson} shows that there are no more congruences like Ramanujan's for $p(n)$.  In other words there are no congruences $p(\ell n+\beta)\equiv 0\pmod\ell$ for $\ell\geq 13$.
Our goal here is to establish  the analogous result  for  $c\phi_5, c\phi_7$, and $c\phi_{11}$. In particular we have the following:

\begin{theorem}\label{thm:main}  For $m\in\{5, 7, 11\}$ and  $\ell$ prime there are no congruences other than \eqref{eq:ram-cong5}--\eqref{eq:ram-cong11} of the form
\begin{gather}\label{eq:frob_congruences}
 \cpm(\ell n+\beta)\equiv 0\pmod\ell.
\end{gather}
\end{theorem}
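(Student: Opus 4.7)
The plan is to mimic, in the higher-level setting of $c\phi_m$, the method used by Ahlgren--Boylan \cite{Ahlgren-Boylan} and Kiming--Olsson \cite{KimingOlsson} for the ordinary partition function. Fix $m\in\{5,7,11\}$ and let $A_m(z) = q^{-m/24}\sum_{n\geq 0} c\phi_m(n)q^n$; by \cite{Chan-Wang-Yang}, up to a standard normalization this is a weight $-1/2$ modular form on $\Gamma_0(m)$ with a specific multiplier system. A congruence of the form \eqref{eq:frob_congruences} is equivalent to the vanishing modulo $\ell$ of the form obtained by applying the $U_\ell$-operator to the twist of $A_m$ that picks out the residue class $\beta\pmod\ell$.

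The first step is to prove an analogue of the Kiming--Olsson result: for $\ell \geq 5$ with $\ell\nmid 6m$, any congruence \eqref{eq:frob_congruences} forces $24\beta \equiv m \pmod\ell$. I would do this by multiplying $A_m$ by a suitable power of $\eta$ to pass to an integer-weight modular form, and then combining the action of the Hecke operator $T_\ell$ on the resulting form mod $\ell$ with Serre's theorem on how the $\theta$-operator affects the filtration. This pins down, for each $\ell$, a single candidate residue class $\beta^{*}\pmod\ell$.

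Having reduced to this single $\beta^{*}$, the argument for the remaining (non-Ramanujan) pairs $(m,\ell)$ splits into two regimes. For $\ell$ beyond an explicit threshold $C(m)$, a filtration chase---applied to the integer-weight form obtained by multiplying $\sum c\phi_m(\ell n+\beta^{*})q^n$ by an appropriate eta-product---would force the filtration of the associated form mod $\ell$ to be so small that no nonzero modular form on $\Gamma_0(m)$ of that weight can realize it, producing a contradiction. For $\ell \leq C(m)$, together with the edge cases $\ell\in\{2,3,m\}$, one verifies directly that some Fourier coefficient $c\phi_m(\ell n + \beta^{*})$ is nonzero modulo $\ell$; a Sturm bound on an appropriate space of modular forms on $\Gamma_0(\ell m)$ ensures that this is a finite computation.

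\textbf{The main obstacle} is that we no longer have the luxury of working on $\SL_2(\Z)$, where the mod-$\ell$ ring of modular forms is generated by $E_4$ and $E_6$ and filtration arguments are particularly clean. At level $m>1$ one must keep careful track of multiple cusps, characters, and the more intricate mod-$\ell$ structure of modular forms on $\Gamma_0(m)$, and the passage from weight $-1/2$ to integer weight must be controlled at every cusp of $\Gamma_0(\ell m)$. Consequently both the theoretical threshold $C(m)$ and the subsequent Sturm-bound verification for each remaining pair $(m,\ell)$ become substantially heavier than in the $\SL_2(\Z)$ case of $p(n)$---hence the ``blend of theory and computation'' referred to in the abstract.
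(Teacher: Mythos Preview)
Your first step---the Kiming--Olsson reduction to the single residue class $24\beta\equiv m\pmod\ell$---is exactly right and is carried out in the paper as Proposition~\ref{prop:kimols}, essentially along the lines you describe.

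The gap is in your second step. The ``filtration chase'' does not produce a weight that is too small to support a nonzero form. What the paper proves (Proposition~\ref{prop:lowpoint}) is that, assuming $h_\ell\mid U_\ell\equiv 0\pmod\ell$, the low point of the theta cycle satisfies
\[
\omega\!\left(\theta^{\frac{\ell+3}{2}}h_\ell\right)=\frac{\ell^2-1}{2}+4,
\]
and this weight is large; the space $S_{(\ell^2-1)/2+4}(m)$ has dimension growing like $\ell^2$. What is bounded \emph{independently of $\ell$} is the dimension of the subspace cut out by the vanishing conditions at both cusps (orders $r_\infty$ and $r_0$ inherited from $h_\ell$ via Lemma~\ref{lem:thetaord}): this subspace has dimension $4$, $5$, $9$ for $m=5,7,11$ respectively. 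That alone is still not a contradiction, since the subspace is nonzero.

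The mechanism that actually finishes the argument---and which your outline is missing---has two further ingredients. First, one constructs explicit bases $F_r$ for $S_k(m)$ (eta-quotients for $m=5,7$, and a more delicate construction for $m=11$) with the property that the initial Fourier coefficients of $F_{r_\infty+i}/q^{r_\infty}$ are \emph{independent of $\ell$}. Second, the first several coefficients of $\theta^{(\ell+3)/2}h_\ell$ are, up to the fixed integers $c\phi_m(n)(24n-m)^2$, governed by a vector of Legendre symbols $\underline\varepsilon_\ell\in\{0,\pm1\}^N$. Hence there are only finitely many possible sign patterns $\underline\varepsilon$, and for each one the linear system ``first equations determine the combination, remaining equations must hold mod $\ell$'' singles out a finite set of primes; one then checks that none of these (apart from the Ramanujan primes) has $\underline\varepsilon_\ell=\underline\varepsilon$. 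There is no threshold $C(m)$: all $\ell$ are handled at once by this finite case analysis, and the ``substantial computation'' is the enumeration of $256$, $576$, and $13{,}312$ sign patterns for $m=5,7,11$. Your proposed split into ``large $\ell$ by theory, small $\ell$ by Sturm bound'' would require producing such a $C(m)$, and the filtration argument as you have sketched it does not do so.
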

\begin{remark}
   It should be  possible with much more effort to use the techniques of this paper to  prove the nonexistence of congruences \eqref{eq:frob_congruences} for larger values of $m$.  For example, we show in Section~\ref{sec:m=13} that there are no such congruences when $m=13$.
\end{remark}

In the next section we briefly discuss some background material and preliminary lemmas.    In Section~\ref{sec:classify} we adapt the methods of \cite{KimingOlsson} and \cite{Ahlgren-Boylan}
to prove that the existence of a congruence imposes strong restrictions on the theta-cycle of a particular modular form associated to $\cpm$.
In the following sections we describe the substantial computations which are necessary for $m=5, 7, 11$ to prove Theorem~\ref{thm:main} (by contrast the analogous computation in \cite{Ahlgren-Boylan} for $m=1$ is quite simple).  For example, the case $m=11$  (which is the most complicated of the cases we consider) requires the construction of  explicit bases for spaces of modular forms on $\Gamma_0(11)$ which are tailored to the problem, as well as the consideration of more than 13,000 linear systems of dimension $9\times11$. 
We give complete treatments of the cases $m=5, 11$ and a sketch for $m=7,13$.

\section{Background}\label{sec:background and preliminary results}
Here we discuss some background and notation and prove some preliminary lemmas. 
We begin by defining the functions $\cpm$.
The Frobenius symbol associated to a partition of $n$
is an array of non-negative integers
\begin{equation} \label{eq:frob-symb}
    \left( \begin{array}{cccc}
    a_1 & a_2 & \cdots & a_r \\
    b_1 & b_2 & \cdots&b_r\end{array} 
    \right)
\end{equation}
 with strictly decreasing rows and with
\begin{equation} \label{eq:n-sum-frob-symb}
    n=r+\sum^{r}_{i=1}a_i+\sum^{r}_{i=1} b_i.
\end{equation}
An $m$-colored generalized Frobenius symbol
is a  symbol \eqref{eq:frob-symb} in which entries are allowed to repeat up to $m$ times.
Each entry is colored with one of the
 colors $1, 2, \dots, m$
and the rows are strictly decreasing 
with respect to the ordering
\begin{gather*}
    0_1<0_2<\cdots<0_m<1_1<1_2<\cdots<1_m<2_1<2_2<\cdots.
\end{gather*}
Finally, $\cpm(n)$ is the  number of $m$-colored generalized Frobenius symbols which sum to $n$ 
with the formula \eqref{eq:n-sum-frob-symb}.
Andrews \cite[Theorem 5.2]{Andrews} proved that 
\begin{gather}\label{eq:cphidef}
\sum c\phi_m\pfrac{n+m}{24}q^\frac n{24}=\eta^{-m}(z)\sum_{n=0}^\infty r_m(n)q^n,
\end{gather}
where $r_m(n)$ is the number of representations of $n$ by the quadratic form
\begin{equation}\label{eq:r_Qdef}
    \sum_{i=1}^{m-1}x_i^2+\sum_{1\leq i<j\leq m-1}x_ix_j
\end{equation}
and $\eta(z)$ is the Dedekind eta function.

We will also require some facts from the theory of modular forms.
Let $f$ be a function on the upper half-plane $\H$, $k$ be a non-negative even integer,  and $\gamma=\spmatrix abcd\in \GL_2^+(\Q)$. We denote the weight $k$ slash operator by 
\begin{gather}
\left(f\|_k\gamma\right)(z):=(\det\gamma)^\frac k2(cz+d)^{-k}f(\gamma z).
\end{gather}
 Let $A$ be a subring of $\C$, $m$ be a positive integer, and $\chi$ be a Dirichlet character modulo $m$. Let $M_k(m, \chi, A)$  be the $A$-module of functions $f$ on $\H$ which  satisfy the transformation law
\begin{gather}\label{eq:transform}
f\|_k\gamma=\chi(\gamma)f \quad \text{ for all } \quad \gamma \in \Gamma_0(m),
\end{gather}
which are holomorphic on $\H$ and at the cusps,
and which have Fourier coefficients in $A$. 
Denote by $S_k(m, \chi, A)$ the submodule of cusp forms and by $M_k^!(m, \chi, A)$  the module  of weakly holomorphic modular forms (which are allowed poles at the cusps).  
We omit $A$ from the notation when it is the field of algebraic numbers and $\chi$ when it is the trivial character.  By a modular function on 
 $\Gamma_0(m)$ we mean a meromorphic function which  satisfies \eqref{eq:transform} with $k=0$ and trivial $\chi$. 

For even $k \ge 2$ the weight $k$ Eisenstein series $E_k$ is defined by
\begin{gather}\label{eq:eis}
    E_k:= 1 - \frac{2k}{B_k} \sum_{n=1}^{\infty}\sigma_{k-1}(n)q^n.
\end{gather}
This is a modular form when $k \ge 4$ and a quasi-modular form when $k=2$. 
For $\ell\geq 5$ we have the congruences
\begin{gather}\label{eq:Ekmodp}
    E_{\ell+1} \equiv E_2 \pmod{\ell}, \qquad  E_{\ell-1} \equiv 1 \pmod{\ell}. 
\end{gather}

The Fricke involution on $M_k^!(m)$ is given by $f\to f\sl _kW_m$, where
\begin{gather}\label{eq:fricke_involution}
W_m = \pMatrix 0{-1}m0.
\end{gather}
This preserves the spaces of modular and cusp forms.
If $f \in M_{k}^!(m)$, then we have Fourier expansions in powers of $q:=e^{2\pi i z}$:
\begin{gather}\label{eq:ord_cusps}
    f = a(r_\infty)q^{r_\infty}+\cdots,
    \qquad  f\sl_kW_m = a(r_0)q^{r_0}+\cdots,  \qquad a(r_\infty), a(r_0)\neq 0.
\end{gather}
 These correspond to the expansion of $f$ at the cusps $\infty$, $0$ of $\Gamma_0(m)$, and we 
  define $\ord_\infty(f):=r_\infty$
 and $\ord_0(f):=r_0$.

We require some facts which follow from the $q$-expansion principle \cite[ VII, Corollary 3.12]{DeligneRapoport}  (see \cite[Lemma 2.3]{ahlgren-beckwith-raum} for a convenient formulation).
Suppose that $\ell$ is a prime with $\ell\nmid m$.  
If $\gamma=\spmatrix abcd\in \SL_2(\Z)$
with  $\ell\nmid c$ and $f, g\in M_k^!(m)$ have $\ell$-integral coefficients, then the same is true of 
 $f\sl _k\gamma$, $g\sl _k\gamma$,
 and 
 \begin{gather}\label{eq:qexp}
     f\equiv g\pmod\ell\iff f\sl _k \gamma\equiv g\sl _k\gamma\pmod\ell.
 \end{gather}
If $f\in M_k^!(m)$ has $\ell$-integral
Fourier coefficients then (still assuming that $\ell\nmid m$)  the same is true of $f\sl _kW_m$, and 
it makes sense to define 
$\ord_\infty(f\pmod\ell):= r'_\infty$
and 
$\ord_0(f\pmod\ell)= r'_0$, where (with 
$a(r'_\infty), a(r'_0)\not\equiv  0\pmod\ell$) we have 
\begin{gather}
   f \equiv a( r'_\infty)q^{ r'_\infty}+\cdots\pmod\ell,
    \qquad   f\sl_kW_m \equiv a( r'_0)q^{ r'_0}+\cdots\pmod\ell.
\end{gather}

If $f=\sum a(n)q^n$ then we define the theta operator by 
\begin{gather}\label{eq:thetadef}
\theta f:=q\frac{df}{dq}=\sum na(n)q^n.
\end{gather}
We   require some standard facts about filtrations.
 If  $\ell \geq 5$ is prime and 
  $f =\sum a(n)q^n\in \Z[\![q]\!]$  then we define
\begin{gather}
\bar f:= \sum \bar{a(n)}q^n \in \left(\Z/\ell\Z\right)[\![q]\!].
\end{gather}
If  $\bar f$ is the reduction modulo $\ell$ of an element  $M_k(m, \Z)$ for some $k$ then we define the filtration 
\begin{gather}
w (\bar f):= \inf\{ k':  \ \text{there exists $g \in M_{k'}\left(m,\Z\right)$ with $\bar f=\bar  g$}\}.
\end{gather} 
We write $\omega(f)=\omega(\bar f)$ for convenience. 

We have the following basic facts (see e.g. \cite[\S 1]{Jochnowitz}).
\begin{lemma}\label{lem:filt_theta}
Suppose that $\ell\geq 5$ is a prime with $\ell\nmid m$  and that $\bar f\neq 0$
is the reduction modulo $\ell$ of an element of $M_ k\left(m,\Z\right)$. Then we have:  
\begin{enumerate}
    \item 
If $g \in M_{k'}\left(m,\Z\right)$ has $\bar f=\bar  g$ then  $k \equiv k'\pmod{\ell-1}$.
\item 
     $\omega(\theta\, f) \leq \omega(f) + \ell +1$, with equality if and only if $\omega(f) \not \equiv 0 \pmod \ell. $
\item  $\omega(\theta\, f)\equiv \omega(f)+2 \pmod{\ell-1}$.
\end{enumerate}
\end{lemma}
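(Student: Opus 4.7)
The plan is to prove parts (2) and (3) using the Ramanujan--Serre derivative identity and to take part (1) from Jochnowitz's extension of Serre's filtration theorem to higher level. Recall that for $f\in M_k(m)$, the form
$$\vartheta f := \theta f - \tfrac{k}{12}\, E_2\, f$$
is a holomorphic modular form of weight $k+2$ on $\Gamma_0(m)$; this is the classical identity compensating for the non-modularity of $E_2$. Combined with the congruences $E_2\equiv E_{\ell+1}\pmod\ell$ and $E_{\ell-1}\equiv 1\pmod\ell$ from \eqref{eq:Ekmodp}, this gives
$$\theta f \;\equiv\; E_{\ell-1}\,\vartheta f \;+\; \tfrac{k}{12}\,E_{\ell+1}\,f \pmod\ell,$$
in which the right-hand side is a genuine modular form of weight $k+\ell+1$ (the two summands have weights $(\ell-1)+(k+2)$ and $k+(\ell+1)$, respectively). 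Applying this with $k=\omega(f)$ yields the inequality in (2), and since $k+\ell+1\equiv k+2\pmod{\ell-1}$, it also yields (3) once (1) is in hand.

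For the equality case of (2): if $\ell\mid k$ then $\tfrac{k}{12}\equiv 0\pmod\ell$, so the displayed congruence collapses to $\theta f\equiv\vartheta f\pmod\ell$ and $\omega(\theta f)\le k+2<k+\ell+1$. Conversely, suppose $\omega(\theta f)<k+\ell+1$; then by (3) we must have $\omega(\theta f)\le k+2$, so $\overline{\theta f}=\bar h$ for some $h\in M_{k+2}(m,\Z)$. Rearranging the displayed congruence and subtracting the weight-$(k+2)$ contributions forces $\overline{\tfrac{k}{12}E_{\ell+1}f}$ to lie in the image of $M_{k+2}(m,\Z)$; since $\overline{E_{\ell+1}f}$ has $q$-expansion $\bar E_2\bar f$ and one checks that this has filtration exactly $k+\ell+1$ whenever $\omega(f)=k$, this is possible only if $\tfrac{k}{12}\equiv 0\pmod\ell$, i.e.\ $\ell\mid k$.

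Part (1) is the main obstacle and the only step requiring genuinely new input beyond the Ramanujan--Serre computation; I would quote it directly from \cite[\S 1]{Jochnowitz}. The underlying idea is that multiplication by $E_{\ell-1}$ preserves reductions modulo $\ell$ while raising the weight by $\ell-1$, so two forms of weights $k$ and $k'$ with $(\ell-1)\mid(k'-k)$ can share a reduction; the $q$-expansion principle from \cite{DeligneRapoport} recorded in \eqref{eq:qexp} then rules out any weight coincidence outside this residue class modulo $\ell-1$. Once (1) is granted, the single identity above delivers both (2) and (3).
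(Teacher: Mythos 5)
The paper gives no argument here: Lemma~\ref{lem:filt_theta} is simply quoted from \cite[\S 1]{Jochnowitz}, so you are attempting a real proof where the paper settles for a citation. Your derivation of the inequality in (2) and of (3) from the Ramanujan--Serre identity $\theta f \equiv E_{\ell-1}\,\vartheta f + \tfrac{k}{12}E_{\ell+1}f \pmod\ell$ (with $k=\omega(f)$) is correct, granting (1) from Jochnowitz, and so is the forward direction of the equality criterion: if $\ell\mid k$ the $E_{\ell+1}$ term drops out and $\omega(\theta f)\le k+2$.

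The gap is in the converse. You reduce ``$\omega(\theta f)<k+\ell+1 \Rightarrow \ell\mid k$'' to the claim that $\omega(\bar E_2\bar f)=k+\ell+1$ whenever $\omega(f)=k$, and dismiss that claim with ``one checks.'' But when $\ell\nmid k$ the very identity you are using gives $\bar E_2\bar f=\tfrac{12}{k}\bigl(\overline{\theta f}-\overline{\vartheta f}\bigr)$ with $\omega(\vartheta f)\le k+2$, so $\omega(\bar E_2\bar f)=k+\ell+1$ is \emph{equivalent} to $\omega(\theta f)=k+\ell+1$; you have restated the assertion, not established it. The genuine content --- that multiplying a form of exact filtration $k$ by $\bar E_{\ell+1}$ cannot drop the filtration below $k+\ell+1$ --- is the Serre--Swinnerton-Dyer lemma, extended to level $m$ by Jochnowitz, and its proof needs the structure theory of the graded ring of mod-$\ell$ forms (in particular the coprimality of the reductions of $E_{\ell-1}$ and $E_{\ell+1}$, i.e.\ separability of the Hasse invariant). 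That is not a routine check. Since you already invoke \cite{Jochnowitz} for (1), the consistent and correct move is to cite the same source for the equality criterion in (2) as well; your Ramanujan--Serre computation honestly proves only the inequality in (2) and part (3).
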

We will need the following result at several points in our proofs.
\begin{lemma}\label{lem:thetaord}
    Let $\ell, m \geq 5$ be distinct primes. Suppose that $f\in M_{k_0}(m)$ vanishes to orders $r_\infty$ and  $r_0$ at  $\infty$ and $0$, and that $\omega(\theta^i\, f)=k$. 
    Then there exists $g\in M_k(m)$ with 
    $g\equiv \theta^i\, f\pmod\ell$ and 
    \begin{gather}
     \ord_\infty(g\pmod\ell)\geq r_\infty,\qquad  
         \ord_0(g \pmod\ell)\geq r_0. 
    \end{gather}
\end{lemma}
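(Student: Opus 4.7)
The plan has two stages: first, inductively construct a genuine modular form $G_i \in M_{k_0+i(\ell+1)}(m)$ congruent to $\theta^i f \pmod{\ell}$ whose holomorphic vanishing orders at the two cusps dominate those of $f$; then descend to the filtration weight $k$ via $E_{\ell-1} \equiv 1 \pmod{\ell}$, verifying that the cusp bounds survive reduction modulo $\ell$.

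For the inductive step, given $h \in M_{k'}(m)$ with $\ord_\infty(h) \geq r_\infty$ and $\ord_0(h) \geq r_0$, set $\tilde h := k' E_2 h - 12 \theta h$; the quasi-modular transformation of $E_2$ under $\Gamma_0(m) \subset \SL_2(\Z)$ combined with the chain rule shows $\tilde h \in M_{k'+2}(m)$. A direct computation, applying the level-one transformation of $E_2$ to $mz$, yields the key formula
\[
\tilde h \sl _{k'+2} W_m = k' m E_2(mz)\cdot (h \sl _{k'} W_m) - 12\, \theta(h \sl _{k'} W_m).
\]
Since $m E_2(mz)$ has nonzero constant term and $\theta$ does not decrease $q$-order, the right side has order $\geq r_0$ at $\infty$; thus $\ord_0(\tilde h) \geq r_0$, and analogously $\ord_\infty(\tilde h) \geq r_\infty$. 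Using \eqref{eq:Ekmodp} to rewrite
\[
12\theta h \equiv k' E_{\ell+1}\, h - \tilde h \cdot E_{\ell-1} \pmod{\ell}
\]
presents $12\theta h$ modulo $\ell$ as a modular form of weight $k'+\ell+1$. Because the level-one series $E_{\ell\pm 1}$ do not vanish at either cusp of $\Gamma_0(m)$ (for example $E_{\ell-1} \sl _{\ell-1} W_m(z) = m^{(\ell-1)/2} E_{\ell-1}(mz)$), multiplication by $E_{\ell\pm 1}$ preserves cusp orders. Starting from $G_0 := f$ and iterating produces $G_i \in M_{k_0+i(\ell+1)}(m)$ with $G_i \equiv \theta^i f \pmod{\ell}$ and $\ord_\infty(G_i),\ord_0(G_i) \geq r_\infty, r_0$.

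For the descent, fix any $g \in M_k(m)$ with $\bar g = \overline{\theta^i f}$, which exists by $k = \omega(\theta^i f)$. Iterating Lemma~\ref{lem:filt_theta}(3) gives $k \equiv k_0+2i \pmod{\ell-1}$, so we may write $k_0+i(\ell+1) = k + j(\ell-1)$ with $j \geq 0$, and then $g \cdot E_{\ell-1}^j \equiv G_i \pmod{\ell}$ in $M_{k_0+i(\ell+1)}(m)$. Applying the Fricke involution in weight $k_0+i(\ell+1)$ and using $E_{\ell-1}(mz) \equiv 1 \pmod{\ell}$ gives $E_{\ell-1}^j \sl _{j(\ell-1)} W_m \equiv m^{j(\ell-1)/2} \pmod{\ell}$, a nonzero constant; hence
\[
g \sl _k W_m \equiv m^{-j(\ell-1)/2}\, G_i \sl _{k_0+i(\ell+1)} W_m \pmod{\ell},
\]
whence $\ord_0(g \pmod{\ell}) \geq \ord_0(G_i) \geq r_0$. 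The bound $\ord_\infty(g \pmod{\ell}) \geq r_\infty$ is immediate, since the Fourier coefficients $n^i a(n)$ of $\theta^i f$ vanish for $n < r_\infty$.

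The main technical hurdle is the key formula for $\tilde h \sl _{k'+2} W_m$: the naive hope that the Ramanujan–Serre derivative commutes with the Fricke involution is \emph{false} for $m > 1$, and one must check explicitly that in the actual formula the role of $E_2$ on the Fricke side is played by $m E_2(mz)$, whose nonvanishing leading coefficient still suffices to preserve the order of vanishing at $\infty$.
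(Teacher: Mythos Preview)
Your proof is correct and follows essentially the same route as the paper's. Your $\tilde h$ is $-12\,\partial_{k'}h$, so your lift $\tfrac{1}{12}\bigl(k'E_{\ell+1}h-\tilde h\,E_{\ell-1}\bigr)$ coincides exactly with the paper's $E_{\ell-1}\,\partial_{k'}h+\tfrac{k'}{12}E_{\ell+1}h$; the iteration and the descent via $E_{\ell-1}^j$ together with the $q$-expansion principle (which you invoke implicitly when applying $W_m$ to the congruence $gE_{\ell-1}^j\equiv G_i$) are identical. The only cosmetic difference is that you spell out the Fricke identity $\tilde h\sl_{k'+2}W_m=k'm\,E_2(mz)\,(h\sl_{k'}W_m)-12\,\theta(h\sl_{k'}W_m)$ explicitly, whereas the paper simply asserts that $\ord_0\partial_{k_0}f\geq r_0$ is ``straightforward to show.''
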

\begin{proof}
 Let  $\partial_{k_0}$ be the
       Ramanujan-Serre derivative, which is  given by 
 \begin{gather}
     \partial_{k_0} f  := \theta\, f -\frac{k_0}{12} E_2 f \in M_{k_0+2}(m).
\end{gather}
      It is straightforward to show that we have $\ord_\infty \partial_{k_0} f \geq r_\infty$ and  $ \ord_0 \partial_{k_0} f \geq r_0$.
Now let
\begin{gather}
h:= E_{\ell-1}\,\partial_{k_0} f +\frac{k_0}{12}E_{\ell+1}\, f\in M_{k_0+\ell+1}(m).    
\end{gather}
 By \eqref{eq:Ekmodp} we have  $h\equiv \theta\, f\pmod\ell$.  We also have 
\begin{gather}
     h\sl_{k_0+\ell+1} W_m  = m^{\frac{\ell-1}{2}}E_{\ell-1}(mz) \,\partial_{k_0} f\sl_{k_0+2}W_m + \frac{k_0}{12}m^{\frac{\ell+1}{2}}E_{\ell+1}(mz)\,f\sl_{k_0}W_m,
 \end{gather}
so 
 $\ord_\infty h \geq r_\infty$ and  $ \ord_0 h  \geq r_0$.
 
Iterating this process and using the assumption that $\omega\left(\theta^i\, f\right)=k$, 
we see that there  are 
    $H\in M_{k_0+i(\ell+1)}(m)$  and $g\in M_k(m)$ such that  $\ord_\infty H \geq r_\infty$,
    $\ord_0 H \geq r_0$, and
    \begin{gather}
        H\equiv g\equiv \theta^i \,f\pmod\ell.
    \end{gather}
    This gives a congruence between two modular forms of weight $k_0+i(\ell+1)$:
    \begin{gather}
        H\equiv g E_{\ell-1}^\frac{k_0+i(\ell+1)-k}{\ell-1}\pmod\ell.
    \end{gather}
From the $q$-expansion principle we obtain
\begin{gather}
    H\sl_{k_0+i(\ell+1)}W_m\equiv g\sl_kW_m\cdot m^{\frac{k_0+i(\ell+1)-k}{2}}E_{\ell-1}^\frac{k_0+i(\ell+1)-k}{\ell-1}(mz)\pmod\ell.
\end{gather}
    The lemma follows from these facts.
\end{proof}

Finally, we will frequently encounter the 
 Dedekind eta function, which  is the modular form of weight $1/2$ on $\SL_2(\Z)$ defined by 
 \begin{gather}\label{eq:etadef}
\eta(z)=q^\frac1{24}\prod_{n=1}^\infty(1-q^n).
  \end{gather}

\section{Classifying congruences for \texpdf{$\cpm$}{c phi m}}
\label{sec:classify}
In this section we consider arbitrary primes $\ell$ and $m$ with $\ell>m\geq 5$, and    we deduce  restrictive consequences of the existence of a congruence 
\begin{gather}\label{eq:genericcong}\cpm(\ell n+\beta)\equiv 0\pmod \ell\qquad\text{for all $n$.}     
\end{gather}
Chan, Wang, and Yang \cite[Thm.~$2.1$]{Chan-Wang-Yang} proved that if $m$ is a positive odd integer, then 
\begin{equation}\label{eq:Amdef}
 A_m(z):=\prod_{n \geq 1}(1-q^n)^m\sum^\infty_{n=0} \cpm(n)q^n \in M_\frac{m-1}2\left(m, \ptfrac\bullet m\right).
\end{equation}
Fix a prime $m\geq 5$, and for 
 each prime $\ell>m$  define
\begin{gather} \label{eq:hldef} h_\ell(z):=\frac{\eta^{\ell^2}(mz)}{\eta^m(z)}A_m(z)
=\eta^{\ell^2}(mz)\sum \cpm\pfrac{ n+m}{24}q^\frac n{24}
=q^\frac{(\ell^2-1)m}{24}+\cdots.
\end{gather}  
 By a standard 
 criterion for eta-quotients \cite[Thm. 1.64]{ono_web})
we have 
$h_\ell\in S_{(\ell^2-1)/2}(m,\Z)$.  Since $\eta^{\ell^2}(mz)\equiv \eta(\ell^2 mz)\pmod\ell$  we also  have   
\begin{gather}\label{eq:h_ell_mod_ell}
       h_{\ell}(z) \equiv q^{\frac{(\ell^2-1)m}{24}}\sum_{n=0}^{\ell^2m-1}\cpm(n)q^n +O\left(q^{\ell^2m+\frac{(\ell^2-1)m}{24}}\right) \pmod \ell.
\end{gather}
The following   generalization of a result of Kiming and Olsson \cite{KimingOlsson} reduces us to considering a single residue class modulo $\ell$.
\begin{proposition}\label{prop:kimols}
Suppose that $\ell > m \ge 5$ are primes and  that for some $\beta\in \Z$ we have
\begin{gather}\label{eq:cpmcong0}
    \cpm(\ell n+\beta)\equiv 0\pmod \ell\qquad\text{for all $n$.} 
\end{gather}
 Then $\beta\equiv m/24\pmod\ell$.  
 
 In other words the congruence \eqref{eq:cpmcong0} can occur only if
\begin{gather}\label{eq:cpmcong}
h_\ell\sl U_\ell\equiv 0\pmod\ell.
 \end{gather}
\end{proposition}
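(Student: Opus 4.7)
The strategy, following Kiming and Olsson \cite{KimingOlsson}, is to convert the hypothesis \eqref{eq:cpmcong0} into a statement about an arithmetic progression of Fourier coefficients of $h_\ell$ modulo $\ell$, and then use filtration theory for the theta cycle to rule out every progression other than the trivial one.

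The first step is short. Since $(1-X)^\ell \equiv 1 - X^\ell \pmod \ell$, we have $\eta^{\ell^2}(mz) \equiv \eta(\ell^2 m z) \pmod \ell$, so by \eqref{eq:hldef}
\begin{gather*}
h_\ell \equiv q^{(\ell^2-1)m/24} \prod_{n\geq 1}(1-q^{\ell^2 m n}) \sum_{k\geq 0} c\phi_m(k)\,q^k \pmod \ell.
\end{gather*}
The coefficient of $q^N$ in $h_\ell \pmod \ell$ is therefore a unit-triangular $\Z$-linear combination of the values $c\phi_m(k)$ with $k \equiv N + m\cdot 24^{-1} \pmod \ell$. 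Writing $h_\ell = \sum a(N) q^N$ and letting $\beta_0 \in \{0, \ldots, \ell-1\}$ be the residue of $\beta - m \cdot 24^{-1}$ modulo $\ell$, the hypothesis \eqref{eq:cpmcong0} is equivalent to
\begin{gather*}
h_\ell^{(\beta_0)} := \sum_{N \equiv \beta_0\,(\ell)} a(N)\, q^N \equiv 0 \pmod \ell.
\end{gather*}
Both conclusions of the proposition -- namely $\beta \equiv m/24 \pmod \ell$ and $h_\ell \sl U_\ell \equiv 0 \pmod \ell$ -- amount to the assertion $\beta_0 = 0$, so it suffices to rule out $\beta_0 \neq 0$.

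Assume for contradiction $\beta_0 \neq 0$. By Fermat's little theorem,
\begin{gather*}
\theta^k h_\ell \equiv \sum_{r=0}^{\ell-1} r^k\, h_\ell^{(r)} \pmod \ell,
\end{gather*}
and inverting the Vandermonde matrix $(r^k)_{r,k \in \{1,\ldots,\ell-1\}}$ over $\F_\ell$ by Lagrange interpolation in $r\neq 0$ produces a nontrivial relation
\begin{gather*}
\sum_{k=1}^{\ell-1} c_k(\beta_0)\, \theta^k h_\ell \equiv 0 \pmod \ell, \qquad c_k(\beta_0)\in \F_\ell^\times.
\end{gather*}

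The main obstacle is to show no such relation can hold. Here I would invoke Lemma~\ref{lem:filt_theta}: each application of $\theta$ raises the filtration by exactly $\ell+1$ unless $\omega \equiv 0 \pmod \ell$, and $\omega(\theta^k h_\ell) \equiv \omega(h_\ell) + 2k \pmod{\ell-1}$. Starting from $\omega(h_\ell) \leq (\ell^2-1)/2$, one analyzes the full theta cycle of $h_\ell$ modulo $\ell$, pinpoints the (very few) positions at which the filtration may drop, and then uses that modular forms of distinct filtrations are $\F_\ell$-linearly independent to force every $c_k(\beta_0)$ to vanish. This is the analog for $h_\ell \in S_{(\ell^2-1)/2}(m,\Z)$ of the theta-cycle argument of \cite{KimingOlsson} in level one, and carrying it out carefully -- particularly controlling possible drops for our higher-level, higher-weight cusp form -- is the main technical content. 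Once this is done, $\beta_0 = 0$ follows and the proposition is proved.
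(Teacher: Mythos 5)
Your first half is sound and, after unwinding the Lagrange interpolation, reproduces the paper's calculation almost exactly: the unique polynomial of degree $\le\ell-1$ vanishing on $\F_\ell\setminus\{\beta_0\}$ and equal to $1$ at $\beta_0$ is $P(X)=1-(X-\beta_0)^{\ell-1}=-\sum_{k=1}^{\ell-1}\beta_0^{\ell-1-k}X^k$, so your relation is $\sum_{k=1}^{\ell-1}\beta_0^{\ell-1-k}\,\theta^k h_\ell\equiv0\pmod\ell$, which is precisely \eqref{eq:prod_rule_simp} with $a\equiv\beta_0$. (The paper reaches this via $(q^{-a}h_\ell)\sl U_\ell\equiv0$ and the product rule; the two derivations are equivalent.) Your assertion that every $c_k$ lies in $\F_\ell^\times$ does hold, but it follows from the explicit formula and the oddness of $\ell$ rather than from generic Vandermonde inversion, and is nontrivially needed later, so it should be verified rather than asserted.

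The genuine gap is in your final paragraph, which names the right tools but does not carry out the contradiction, and one crucial ingredient is missing entirely. You write ``Starting from $\omega(h_\ell)\le(\ell^2-1)/2$,'' but the trivial upper bound does not suffice: the argument needs the exact value $\omega(h_\ell)=(\ell^2-1)/2$. This is Lemma~\ref{lem:filt_h_l}, proved via a multiplicative trace of $h_\ell$ over $\Gamma_0(m)\backslash\Gamma_0(1)$ together with a Sturm-bound comparison using the orders of $h_\ell$ at \emph{both} cusps; your proposal never touches this. With that equality, $\omega(h_\ell)\equiv(\ell-1)/2\pmod\ell$, so Lemma~\ref{lem:filt_theta}(ii) shows the theta cycle does not drop until $j=(\ell+1)/2$, i.e.\ $\omega(\theta^j h_\ell)=(\ell^2-1)/2+j(\ell+1)$ for $0\le j\le(\ell+1)/2$. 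The paper then restricts the relation to the graded piece of weight $\equiv2\pmod{\ell-1}$, which by Lemma~\ref{lem:filt_theta}(iii) contains exactly the two terms $i=1$ and $i=(\ell+1)/2$, both with nonzero coefficients; a nontrivial relation between two mod-$\ell$ forms forces equal filtration, yet these filtrations are visibly distinct. Your phrase ``force every $c_k(\beta_0)$ to vanish'' misdescribes the mechanism --- one does not deduce that all $c_k$ vanish (they manifestly do not), one extracts a single graded component and derives a contradiction there. So the outline is correct, but the computation of $\omega(h_\ell)$ and the grade-$2$ extraction are the actual content of the proof and are absent from your proposal.
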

Once this is proved the next proposition will provide a main tool  used in the following sections to classify the existence of congruences for particular values of $m$.
\begin{proposition}\label{prop:lowpoint}
   Suppose that $\ell > m\geq 5$ are prime. 
   If $h_\ell\|U_\ell\equiv 0\pmod\ell$ then 
   \begin{gather}\label{eq:lowpoint}
       w\left(\theta^\frac{\ell+3}2\,h_\ell\right)=\frac{\ell^2-1}2+4.
   \end{gather}
\end{proposition}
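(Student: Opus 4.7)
The plan is to convert the hypothesis into a theta-cycle condition and then track filtrations. Since $n^{\ell-1}\equiv 1\pmod\ell$ whenever $\ell\nmid n$, the congruence $h_\ell\|U_\ell\equiv 0\pmod\ell$ is equivalent to $\theta^{\ell-1}h_\ell\equiv h_\ell\pmod\ell$, so the theta cycle $\bar h_\ell,\overline{\theta h_\ell},\dots$ has period dividing $\ell-1$.

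I would first pin down $\omega(h_\ell)=(\ell^2-1)/2$. The upper bound is automatic from $h_\ell\in S_{(\ell^2-1)/2}(m,\Z)$. For the lower bound, a Fricke-involution computation --- using the standard transformation of $\eta(Nz)$ and the theta-series identity $A_m(z)=\sum r_m(n)q^n$, whose dual theta series has constant term $1$ so that $\ord_0 A_m=0$ --- gives $\ord_0 h_\ell=(\ell^2-m^2)/24$. The valence formula for $\Gamma_0(m)$ (index $m+1$, cusps $\infty$ and $0$) then yields
\begin{gather*}
\omega(h_\ell)\ge\frac{12(\ord_\infty h_\ell+\ord_0 h_\ell)}{m+1}=\frac{\ell^2-m}{2},
\end{gather*}
and since $\omega(h_\ell)\equiv 0\pmod{\ell-1}$ and $\ell>m$, the only such value in $[(\ell^2-m)/2,(\ell^2-1)/2]$ is $(\ell^2-1)/2$ itself. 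With $\omega(h_\ell)$ fixed, Lemma~\ref{lem:filt_theta}(2) and induction on $j$ give $\omega(\theta^j h_\ell)=(\ell^2-1)/2+j(\ell+1)$ for $0\le j\le(\ell+1)/2$ (the residue $(\ell-1)/2+j$ mod $\ell$ first vanishes at $j=(\ell+1)/2$), at which step the filtration is $\ell(\ell+1)$, divisible by $\ell$. Hence
\begin{gather*}
\omega(\theta^{(\ell+3)/2}h_\ell)=(\ell+1)^2-k_1(\ell-1)
\end{gather*}
for some integer $k_1\ge 1$.

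The main task is to determine $k_1$. The cycle closure forces the total drop magnitude over one period to equal $(\ell-1)(\ell+1)$. By tracking residues modulo $\ell$, the next filtration divisible by $\ell$ after step $(\ell+3)/2$ occurs at step $(3\ell+1)/2-k_1$, and any hypothetical third such position would sit at $(5\ell+1)/2-k_1-k_2$; this would force $k_1+k_2\ge(3\ell+5)/2$, contradicting $k_1+k_2\le\ell$. Therefore the cycle has at most two drops, and the only possibilities are the single-drop case $k_1=\ell+1$ (giving $\omega(\theta^{(\ell+3)/2}h_\ell)=2(\ell+1)$) and the two-drop cases $k_1\in[(\ell+5)/2,\ell-1]$.

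To finish, I apply Lemma~\ref{lem:thetaord} at $i=(\ell+3)/2$ to produce $g\in M_{\omega(\theta^{(\ell+3)/2}h_\ell)}(m)$ whose orders of vanishing at $\infty$ and $0$ dominate those of $h_\ell$; the same valence computation as in the second paragraph gives $\omega(\theta^{(\ell+3)/2}h_\ell)\ge(\ell^2-m)/2$. A direct check shows that for $\ell>m\ge 5$ this bound rules out the single-drop case and all two-drop cases with $k_1>(\ell+5)/2$, leaving the unique value
\begin{gather*}
\omega(\theta^{(\ell+3)/2}h_\ell)=(\ell+1)^2-\tfrac{(\ell+5)(\ell-1)}{2}=\tfrac{\ell^2-1}{2}+4.
\end{gather*}
The main obstacle is the combinatorial step --- bounding the number of drops in the cycle and verifying that the valence bound is saturated only at $k_1=(\ell+5)/2$; the rest is a direct application of Lemmas~\ref{lem:filt_theta} and~\ref{lem:thetaord} together with the $\Gamma_0(m)$ valence formula.
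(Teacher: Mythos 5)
Your argument is correct and follows essentially the same route as the paper: first pin down $\omega(h_\ell)=(\ell^2-1)/2$, then track the theta cycle through the drop at step $(\ell+1)/2$ so that $\omega(\theta^{(\ell+3)/2}h_\ell)=(\ell+1)^2-k_1(\ell-1)$, and finally sandwich $k_1$ between the cycle-closure lower bound and the valence/Sturm upper bound. The two small differences in presentation: you organize the lower bound as ``at most two drops in the cycle, hence $k_1\ge(\ell+5)/2$,'' whereas the paper reaches the same conclusion by assuming $\alpha\le(\ell+3)/2$ and forcing $\omega(\theta^{\ell-1}h_\ell)>(\ell^2-1)/2$; these are equivalent. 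You also invoke the mod-$\ell$ valence formula on $\Gamma_0(m)$ as a black box, where the paper realizes it concretely through the multiplicative trace in Lemmas~\ref{lem:filt_h_l} and~\ref{lem:repeat_theta_hl} (this is worth stating, since the formula is being applied to a form that only agrees with $\theta^i h_\ell$ modulo $\ell$, and Lemma~\ref{lem:thetaord} is needed to guarantee the cuspidal vanishing orders you use). One point you gloss over: for $\ell=7$ (with $m=5$) the valence bound only gives $k_1\le\ell$, not $k_1\le(\ell+5)/2$; the paper handles $\ell=7$ by a separate direct check. Your enumeration does in fact close this case because the two-drop window $[(\ell+5)/2,\ell-1]$ collapses to the single value $6$ when $\ell=7$ and the single-drop value $8$ still exceeds $7$, but the phrase ``a direct check shows'' should be expanded to make this visible — otherwise a reader will notice the valence inequality alone is not strong enough at $\ell=7$.
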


The rest of the section is devoted to the proof of these two propositions.
We begin by determining the filtration of the modular form $h_\ell$.

\begin{lemma}\label{lem:filt_h_l}  If $m>\ell\geq 5$ are primes then 
\begin{gather}
\omega(h_{\ell}) = \frac{\ell^2-1}{2}.
\end{gather}  
\end{lemma}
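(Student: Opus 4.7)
The plan is to pin down $\omega(h_\ell)$ by combining the congruence constraint from Lemma~\ref{lem:filt_theta}(1) with the classical Sturm bound for modular forms on $\Gamma_0(m)$ at the cusp $\infty$.

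First I would observe that the explicit $q$-expansion $h_\ell = q^{(\ell^2-1)m/24}+\cdots$ has leading coefficient $1$, so $\bar h_\ell \not\equiv 0 \pmod \ell$ and the filtration $\omega(h_\ell)$ is well-defined; membership $h_\ell \in S_{(\ell^2-1)/2}(m,\Z)$ gives the upper bound $\omega(h_\ell) \leq (\ell^2-1)/2$ for free. By Lemma~\ref{lem:filt_theta}(1), any weight $k'$ realizing $\bar h_\ell$ satisfies $k' \equiv (\ell^2-1)/2 \pmod{\ell-1}$, and since $(\ell^2-1)/2 = (\ell-1)(\ell+1)/2$ is divisible by $\ell-1$, this forces $\omega(h_\ell) \equiv 0 \pmod{\ell-1}$. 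Consequently, if $\omega(h_\ell)$ were strictly less than $(\ell^2-1)/2$, the next admissible value would be at most $(\ell-1)^2/2$.

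The decisive step is the Sturm bound. Setting $k := \omega(h_\ell)$, by the definition of filtration there exists $g \in M_k(m,\Z)$ with $\bar g = \bar h_\ell$, and this $\bar g$ is nonzero with its $q$-expansion vanishing through degree $(\ell^2-1)m/24 - 1$. The Sturm bound on $\Gamma_0(m)$ (index $m+1$ in $\SL_2(\Z)$) then forces $(\ell^2-1)m/24 \leq k(m+1)/12$, i.e.
\begin{gather}
k \geq \frac{(\ell^2-1)m}{2(m+1)}.
\end{gather}
Combining this with the hypothetical bound $k \leq (\ell-1)^2/2$ from the previous paragraph yields $(\ell-1)^2(m+1) \geq (\ell-1)(\ell+1)m$, which after dividing by $\ell-1$ and rearranging simplifies to $\ell \geq 2m+1$. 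Under the stated hypothesis $m > \ell \geq 5$, however, $\ell \leq m-1 < 2m+1$, a contradiction. Hence $\omega(h_\ell) = (\ell^2-1)/2$.

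The main potential obstacle is simply verifying that the classical Sturm bound on $\Gamma_0(m)$ is applicable in the form used above — which requires $h_\ell$ to be a genuinely holomorphic cusp form with trivial nebentypus of the asserted weight, already established in the paper via the eta-quotient criterion. The remainder is elementary arithmetic. It is worth noting that it is precisely the inequality $m > \ell$ (more generally $\ell \leq 2m$) which makes the cusp-$\infty$ Sturm bound alone sufficient; in the opposite regime $\ell > 2m$ the gap between the Sturm lower bound and the target weight exceeds $\ell-1$, and one would have to supplement the argument with information at the cusp $0$ via Lemma~\ref{lem:thetaord} and the Fricke involution.
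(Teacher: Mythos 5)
Your inequality manipulations are correct, but the hypothesis in the lemma is a typo for $\ell > m \geq 5$: this is the regime fixed at the start of Section~\ref{sec:classify}, it is what the proof in the paper invokes in its final line, and it is what makes $\ord_0(h_\ell) = (\ell^2-m^2)/24 \geq 0$, so that $h_\ell$ is a holomorphic cusp form at all. Under the literal reading $m > \ell$, the membership $h_\ell \in S_{(\ell^2-1)/2}(m, \Z)$ that you take as given is false, so the filtration $\omega(h_\ell)$ is not even defined. Under the corrected hypothesis $\ell > m$, your cusp-$\infty$ Sturm bound argument establishes the lemma only in the narrow band $m < \ell \leq 2m$. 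You correctly diagnose this in your final paragraph, but you attribute $\ell > 2m$ to an excluded regime when in fact it is the generic case; the additional information at the cusp $0$ is essential, not optional.

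The paper closes the gap by taking the multiplicative trace $H = \prod_{\gamma \in \Gamma_0(m)\backslash\Gamma_0(1)} g_\ell \sl_{k'}\gamma \in M_{(m+1)k'}(1)$ of the hypothetical low-weight representative $g_\ell$, where $k' = (\ell^2-1)/2-(\ell-1)$. The identity coset contributes order $r_\infty = (\ell^2-1)m/24$ at $\infty$; each of the $m$ non-identity cosets maps $\infty$ to $0$ and contributes $r_0/m = (\ell^2-m^2)/(24m)$. The $q$-expansion principle, applied to the equal-weight congruence $g_\ell E_{\ell-1} \equiv h_\ell \pmod{\ell}$, transfers the exact order $r_0$ at $0$ from $h_\ell$ to $g_\ell$, so $\ord_\infty(H\bmod\ell) = r_\infty + r_0$. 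The level-one Sturm bound then forces $(m+1)(\ell^2-m) \leq (m+1)(\ell-1)^2$, i.e.\ $2\ell \leq m+1$, which contradicts $\ell > m$ for every such $\ell$. Your argument is essentially the restriction of this to the identity coset: it retains the $r_\infty$ term but discards the $r_0$ term, and that dropped term is exactly what makes the bound too weak once $\ell > 2m$.
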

\begin{proof}
The modular form $A_m$ does not vanish at the cusp zero by \cite[(4.10)]{Chan-Wang-Yang}.
From this fact together with  \eqref{eq:hldef}, we find that
 \begin{gather} \label{eq:ord_hl}
r_\infty:=\ord_{\infty}(h_\ell) = \frac{(\ell^2-1)m}{24}\ \ \  \text{ and } \ \ \ r_0:=\ord_{0}(h_\ell) = \frac{\ell^2-m^2}{24}.
 \end{gather}
Assume by way of contradiction that 
there exists a modular form  $g_{\ell} \in M
_{(\ell^2-1)/2-(\ell-1)}(m)$ 
with  $g_\ell\equiv h_\ell\pmod\ell$. 
We then consider the multiplicative trace of $g_{\ell}$:
\begin{gather}\label{eq:mult_trace}
    H:= \prod_{\gamma \in \Gamma_0(m) \backslash \Gamma_0(1)} g_{\ell}\sl_{\frac{\ell^2-1}2-(\ell-1)}\gamma 
    \in M_{(m+1)\left(\frac{\ell^2-1}{2}- (\ell-1)\right)}(1).
\end{gather}
For each coset representative $\gamma$, 
\eqref{eq:qexp} gives
\begin{gather} h_\ell\sl_\frac{\ell^2-1}2 \gamma\equiv  \left (g_\ell E_{\ell-1}\right)\sl_\frac{\ell^2-1}2 \gamma
\equiv g_\ell\sl_{\frac{\ell^2-1}2-(\ell-1)} \gamma\cdot E_{\ell-1}\pmod\ell.
\end{gather}
Since $m$ is prime, the two nonequivalent cusps of  $\Gamma_0(m)$ are $\infty$ and $0$, and each non-identity coset $\gamma$ maps $\infty$ to $0$.
Using \eqref{eq:ord_hl} and \cite[Proposition III.16]{Koblitz} we see that for each such $\gamma$ there is a non-zero constant $c_\gamma$ such that 
\begin{gather}\label{eq:non_ident_coset}
  g_\ell\sl_{\frac{\ell^2-1}2-(\ell-1)} \gamma
  =c_\gamma q^\frac{r_0}m+\cdots.
\end{gather} 
Since there are $m$ non-identity cosets, we obtain
\begin{gather}\label{eq:ordF}
\ord_{\infty}(H \pmod{\ell}) = \frac{(\ell^2-1)m}{24} + \frac{\ell^2-m^2}{24}.
\end{gather}    
From the Sturm bound, we must have  
\begin{gather}  \frac{(\ell^2-1)m}{24} + \frac{\ell^2-m^2}{24}\leq \frac{m+1}{12}\left(\frac{\ell^2-1}{2}-(\ell-1)\right).\end{gather}
The lemma follows since  this contradicts the assumption that $\ell >m$.
\end{proof}

\begin{proof}[Proof of Proposition~\ref{prop:kimols}]
We adapt parts of the proof of \cite[Theorem 3]{KimingOlsson}.
Suppose that we have a congruence 
\begin{gather}\label{eq:betacong}
\cpm(\ell n+\beta)\equiv 0\pmod \ell,
\end{gather}
and define
\begin{gather}\label{eq:a_def}
a:=\beta+\frac{(\ell^2-1)m}{24}.
\end{gather}
From \eqref{eq:hldef} we find that 
\begin{gather}\label{eq:hl_cong}
    q^{-a}h_\ell\equiv \prod_{n=1}^\infty\left(1-q^{\ell^2m n}\right)\sum \cpm(n+\beta)q^n\pmod\ell,
    \end{gather}
from which we see that 
\begin{gather}
    \left(q^{-a}h_\ell \right)\sl U_\ell\equiv 0\pmod\ell.
\end{gather}
Since for any $f=\sum a(n)q^n\in \Z[\![q]\!]$ we have 
\begin{gather}
    \left(f\sl U_\ell\right)^\ell\equiv f-\theta^{\ell-1}\, f \pmod\ell, 
\end{gather}
we conclude that the congruence \eqref{eq:betacong} leads to 
\begin{gather}\label{eq:kimolsProp1}
     \theta^{\ell-1}\,(q^{-a}h_{\ell}) \equiv q^{-a}h_{\ell} \pmod \ell.
\end{gather}

Now suppose by way of contradiction that  $a\not\equiv  0\pmod\ell$.
By the product rule we have 
\begin{gather}
    \theta^{\ell-1}\,(q^{-a}h_\ell) \equiv \sum_{i =0}^{\ell-1} \binom{\ell-1}{i}\theta^{\ell-1-i}(q^{-a})\theta^{i}\, h_\ell  \equiv q^{-a}\sum_{i =0}^{\ell-1} a^{\ell-1-i}\theta^{i}\, h_\ell \pmod \ell. 
\end{gather}
From \eqref{eq:kimolsProp1} this gives 
\begin{gather}\label{eq:prod_rule_simp}
    \sum_{i=1}^{\ell-1}a^{\ell-1-i}\theta^i \,h_{\ell} \equiv 0 \pmod \ell.
\end{gather}
By Lemma~\ref{lem:filt_theta} 
there are only two terms in the sum whose filtration is congruent to $2$ modulo $\ell-1$:
\begin{gather}\label{eq:thetahlFil}
\begin{split}
   \omega(\theta \,h_{\ell})\equiv \omega(\theta^{\frac{\ell+1}{2}}\,h_{\ell}) \equiv 2\pmod{\ell -1}.
\end{split}  
\end{gather}
Since the module of modular forms of level $m$ modulo $\ell$ is graded by $\Z/(\ell-1)\Z$ 
\cite[p. 459]{Gross}, we must have 
\begin{gather}
\omega(\theta \,h_{\ell}) = \omega\left(\theta^{\frac{\ell+1}{2}}\, h_{\ell}\right).
\end{gather}
This is impossible since 
by Lemmas~\ref{lem:filt_theta} and \ref{lem:filt_h_l}
we have 
\begin{gather}
  \omega(\theta^j \,h_{\ell}) =  \frac{\ell^2-1}2+j(l+1),\qquad 0\leq j\leq \frac{\ell+1}{2}.
\end{gather}
Therefore $a\equiv\beta-m/24\equiv 0\pmod\ell$ and the first assertion of the proposition is proved.  The second is then immediate from \eqref{eq:hl_cong}.
\end{proof}

To prove Proposition~\ref{prop:lowpoint}, we require a lower bound for the filtration of each element in the theta cycle for $h_{\ell}$. 
\begin{lemma}\label{lem:repeat_theta_hl} If $\ell > m \geq 5$ are primes and $i \in \mathbb{N}$ then 
\begin{gather}\omega(\theta^i \, h_{\ell}) \geq \frac{\ell^2 - m}{2}.\end{gather}
\end{lemma}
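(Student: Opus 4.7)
The plan is to run the multiplicative-trace/valence argument from the proof of Lemma~\ref{lem:filt_h_l}, but applied to $\theta^i\,h_\ell$ in place of $h_\ell$, with Lemma~\ref{lem:thetaord} serving to carry the cuspidal vanishing of $h_\ell$ down the theta cycle. If $\theta^i\,h_\ell \equiv 0\pmod\ell$ the statement is vacuous, so assume otherwise and set $k:=\omega(\theta^i\,h_\ell)$. Applying Lemma~\ref{lem:thetaord} to $f=h_\ell$ with the orders $r_\infty=(\ell^2-1)m/24$ and $r_0=(\ell^2-m^2)/24$ from \eqref{eq:ord_hl}, we obtain $g\in M_k(m)$ with $g\equiv \theta^i\,h_\ell\pmod\ell$ and
\begin{gather*}
\ord_\infty(g\pmod\ell)\geq r_\infty, \qquad \ord_0(g\pmod\ell)\geq r_0.
\end{gather*}

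Next, form the multiplicative trace
\begin{gather*}
H:=\prod_{\gamma\in\Gamma_0(m)\backslash\Gamma_0(1)} g\sl_k\gamma \in M_{(m+1)k}(\SL_2(\Z)),
\end{gather*}
choosing coset representatives whose bottom-left entry $c$ satisfies $\ell\nmid c$ (possible since $\ell>m$). As in the proof of Lemma~\ref{lem:filt_h_l}, the identity coset contributes $g$ with $\ord_\infty(\overline{g})\geq r_\infty$, and each of the $m$ non-identity cosets sends $\infty$ to $0$, so its $q^{1/m}$-expansion has leading exponent at least $r_0/m$ modulo $\ell$ (by the $q$-expansion principle \eqref{eq:qexp} applied to $g\sl_k W_m$). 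Taking the product gives $\ord_\infty(\overline{H})\geq r_\infty+r_0=(m+1)(\ell^2-m)/24$, and its leading coefficient equals (up to a nonzero root of unity of order dividing $m$) a scalar multiple of $a(r_\infty)\,a(r_0)^m$; since $\ell\nmid m$ and the two $a$-values are nonzero mod $\ell$, this shows $\overline{H}\not\equiv 0\pmod\ell$.

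Since $\overline{H}$ is the nonzero mod-$\ell$ reduction of a modular form of weight $(m+1)k$ on $\SL_2(\Z)$, the valence formula gives $\ord_\infty(\overline{H})\leq (m+1)k/12$, and combining the two bounds yields $k\geq (\ell^2-m)/2$ as desired. The one delicate point is verifying that the $\ord_0$ vanishing of $g$ modulo $\ell$ translates correctly into $q^{1/m}$-order bounds at each non-identity coset; this is exactly the computation already performed around \eqref{eq:non_ident_coset} via \cite[Prop.~III.16]{Koblitz}, and it goes through unchanged in the mod-$\ell$ setting thanks to the choice of coset representatives with $\ell\nmid c$.
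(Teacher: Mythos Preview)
Your proof is correct and follows essentially the same route as the paper: apply Lemma~\ref{lem:thetaord} to transfer the cuspidal vanishing of $h_\ell$ to a weight-$k$ representative of $\theta^i h_\ell$, take the multiplicative trace down to level~$1$, and compare the resulting lower bound on $\ord_\infty(\overline{H})$ against the Sturm/valence bound $(m+1)k/12$. If anything, you are slightly more explicit than the paper about why $\overline{H}\not\equiv 0\pmod\ell$ and about choosing coset representatives with $\ell\nmid c$.
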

\begin{proof}
    Suppose that $\omega(\theta^i\, h_{\ell}) = k$, and recall the definition \eqref{eq:ord_hl}.
    By Lemma~\ref{lem:thetaord} there is a modular form 
    $\tilde h\in S_{k}(m)$ such that $\tilde{h}\equiv \theta^i\, h_\ell\pmod\ell$ and such that $\tilde h$
    vanishes at the cusps to orders at least $r_\infty$ and $r_0$.
    Define 
    \begin{gather}
    \tilde{H} :=  \prod_{\gamma \in \Gamma_0(m) \backslash \Gamma_0(1)} \tilde h\sl_k\gamma \in S_{(m+1)k}(1).   
    \end{gather}
As in the proof of Lemma~\ref{lem:filt_h_l}  we find that $\tilde H$ is $\ell$-integral and that 
    \begin{align*}
        \ord_{\infty}(\tilde{H}\pmod \ell) \geq 
        r_\infty+r_0=\frac{(\ell^2-1)m}{24}+\frac{\ell^2-m^2}{24}.
    \end{align*}
From the Sturm bound we must have 
    \begin{gather}
    \frac{(\ell^2-1)m}{24}+\frac{\ell^2-m^2}{24} \leq \frac{(m+1)k}{12},
    \end{gather} and 
this inequality reduces to 
        $(\ell^2 - m)/2 \leq k$, 
    which proves the lemma.
\end{proof}

\begin{proof}[Proof of Proposition~\ref{prop:lowpoint}]

Recall from Lemma~\ref{lem:filt_h_l} that  
\begin{gather}\label{eq:filtremind}
\omega (h_\ell)=\frac{\ell^2-1}2;
\end{gather} it follows from Lemma~\ref{lem:filt_theta} that $\omega\left(\theta^\frac{\ell+1}2\,h_\ell\right)\equiv 0\pmod\ell$ and that for some positive integer $\alpha$ we have
\begin{gather}\label{eq:filt_theta_l+3/2}
\omega\left(\theta^\frac{\ell+3}2\,h_{\ell}\right) = \frac{\ell^2-1}2+\frac{\ell+3}2(\ell+1) -\alpha(\ell-1).
\end{gather}
By Lemma~\ref{lem:repeat_theta_hl} and the fact that $\ell\geq m+2$ we conclude that 
\begin{gather}
\alpha\leq \frac{\ell+5}2+\frac12+\frac3{\ell-1},    
\end{gather}
from which it follows that 
\begin{gather}\alpha\leq \frac{\ell+5}2\ \ \ \text{when} \ \ \ \ell\geq 11.
\end{gather}
We will prove that 
\begin{gather}\label{eq:alphaval}\alpha= \frac{\ell+5}2,
\end{gather}
 from which the proposition follows immediately by \eqref{eq:filt_theta_l+3/2}.
 
When $\ell=7$ we need only consider $m=5$, and we can 
check \eqref{eq:alphaval} directly.
For $\ell\geq 11$ assume by way of contradiction that $\alpha\leq(\ell+3)/2$.
Then \eqref{eq:filt_theta_l+3/2} gives
\begin{gather}\label{eq:thetafacts}
\omega\left(\theta^\frac{\ell+3}2\,h_{\ell}\right)\geq\frac{\ell^2-1}2+\ell+3,\qquad \omega\left(\theta^\frac{\ell+3}2\,h_{\ell}\right)\equiv 1+\alpha\pmod\ell.
\end{gather}
By Lemma~\ref{lem:filt_theta} we conclude that 
\begin{gather}\label{eq:bigfilt}
\omega\left(\theta^{\frac{\ell+3}2+j}\,h_{\ell}\right)\geq 
\frac{\ell^2-1}2+\ell+3+j(\ell+1),\ \ \ \ 
1\leq j\leq \ell-\alpha-1.
\end{gather}
Setting $j=(\ell-5)/2$ in \eqref{eq:filt_theta_l+3/2} provides a   contradiction to 
\eqref{eq:filtremind} since by assumption we have 
$\theta^{\ell-1}\, h_\ell\equiv h_\ell\pmod \ell$.
\end{proof}

\section{\texorpdfstring{The case $m=5$}{m=5}}
To prove the main theorem in the case when $m =5$
we will require  explicit bases for spaces of modular forms of level 5.
For  $k\equiv 0\pmod 4$ 
we define 
\begin{gather}\label{eq:b5def}
F_r(z):=\eta^{k}(5z)\eta^{k}(z)\cdot\left(\frac{\eta^6(5z)}{\eta^6(z)}\right)^{r-\frac k4}
=q^r+\cdots,\qquad 1\leq r\leq \frac k2-1.
\end{gather}
We  see that $F_r$ has order $k/2-r$ at zero.
By \cite[Thm. 1.64]{ono_web})
we have 
$F_r\in S_k(5)$, 
and by  standard dimension formulas \cite[Theorem~3.5.1]{Diamond-Shurman} we have 
$\dim\left(S_k(5)\right)=k/2-1$. 
It follows that the $F_r$ 
form a basis for $S_k(5)$.

Assuming the existence of a congruence, we next show that the form at the low point of the theta cycle of $h_\ell$ can be written as a linear combination of only four of these basis elements.
In this case we have 
\begin{gather}
     r_\infty=\frac{5(\ell^2-1)}{24},\qquad 
     r_0=\frac{\ell^2-25}{24}. 
\end{gather}

\begin{lemma}\label{lem:lin_combo_5} Suppose that $m=5$ and that  $\ell \geq 7$ is prime. If $h_{\ell}\sl U_{\ell} \equiv 0 \pmod \ell$ then there exist integers $\alpha_{i, \ell}$  such that
\begin{gather}\label{eq:lin_combo_finite5}
    576\cdot \theta^{\frac{\ell+3}{2}}\,h_{\ell} \equiv \sum_{i =0}^{3} \alpha_{i, \ell}F_{r_{\infty}+i} \pmod \ell,
\end{gather}
where $\{F_r\}$ is the basis in \eqref{eq:b5def} with $k=(\ell^2-1)/2+4$. Moreover, we have 
\begin{gather}
\label{eq:F_r_m5_expan}
\begin{aligned}
    \frac{F_{r_{\infty}}}{q^{r_{\infty}}} &= 1-10q+35q^2-30q^3-105q^4 + (241-\ell^2)q^5+ \cdots\\
\frac{F_{r_{\infty}+1}}{q^{r_{\infty}}} &=   q-4q^2+2q^3+8q^4 -5q^5+ \cdots\\
\frac{F_{r_{\infty}+2}}{q^{r_{\infty}}} &= q^2+2q^3+5q^4 +10q^5+ \cdots\\    \frac{F_{r_{\infty}+3}}{q^{r_\infty}} &=q^3 + 8q^4+44q^5 + \cdots.\\
\end{aligned}
\end{gather}

\end{lemma}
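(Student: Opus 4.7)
The plan is to combine Proposition~\ref{prop:lowpoint} with Lemma~\ref{lem:thetaord} to realize $\theta^{(\ell+3)/2}\,h_\ell$ modulo $\ell$ as a cusp form in $S_k(5)$ with prescribed vanishing at both cusps, where $k := (\ell^2-1)/2 + 4$, and then to expand this cusp form in the basis from \eqref{eq:b5def}. The hypothesis $h_\ell\|U_\ell \equiv 0 \pmod \ell$ together with Proposition~\ref{prop:lowpoint} yields $\omega(\theta^{(\ell+3)/2}\,h_\ell) = k$, while \eqref{eq:ord_hl} records the orders $r_\infty = 5(\ell^2-1)/24$ and $r_0 = (\ell^2-25)/24$ of $h_\ell$ at $\infty$ and $0$. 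Lemma~\ref{lem:thetaord} then produces $g \in M_k(5)$ with $g \equiv \theta^{(\ell+3)/2}\,h_\ell \pmod\ell$, $\ord_\infty g \geq r_\infty$, and $\ord_0 g \geq r_0$; since $\Gamma_0(5)$ has only the two cusps $\infty$ and $0$, this forces $g \in S_k(5)$.

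Next I would expand $g = \sum \beta_r F_r$ in the basis of \eqref{eq:b5def}. Because $\ord_\infty F_r = r$ and $\ord_0 F_r = k/2 - r$, only indices with $r_\infty \leq r \leq k/2 - r_0$ can appear. The crucial numerical check is
\[
\frac{k}{2} - r_0 - r_\infty = \frac{\ell^2+7}{4} - \frac{\ell^2-5}{4} = 3,
\]
which pins down exactly the four basis elements $F_{r_\infty + i}$ for $i = 0, 1, 2, 3$. The factor of $576$ on the left of \eqref{eq:lin_combo_finite5} clears the denominators that arise from iterating the Ramanujan--Serre derivative in the proof of Lemma~\ref{lem:thetaord}; since $576$ is a unit modulo $\ell$ for $\ell \geq 7$, the resulting scalars can be chosen as integers $\alpha_{i,\ell}$.

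For the explicit $q$-expansions, I would fold the correction factor $(\eta^6(5z)/\eta^6(z))^{r - k/4}$ into the base to rewrite each $F_{r_\infty+i}$ as a pure eta-quotient $\eta^{a_i}(5z)\,\eta^{b_i}(z)$, where a short calculation shows $(a_0,b_0) = (\ell^2-3, 10)$, $(a_1,b_1) = (\ell^2+3, 4)$, $(a_2,b_2) = (\ell^2+9, -2)$, and $(a_3,b_3) = (\ell^2+15, -8)$. Expanding $q^{-r_\infty}F_{r_\infty+i} = q^i \prod_{n\geq 1}(1-q^{5n})^{a_i}(1-q^n)^{b_i}$ by direct convolution through order $q^5$ produces the stated coefficients; in particular the $\ell^2$-dependence in the $q^5$ term of $F_{r_\infty}/q^{r_\infty}$ comes from the $-(\ell^2-3)q^5$ contribution in $(1-q^5)^{\ell^2-3}$, while the lower-order terms are independent of $\ell$.

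The main obstacle is really the second step: the entire approach rests on the numerical coincidence $k/2 - r_0 - r_\infty = 3$, which confines the representation to exactly four basis elements and makes the subsequent finite linear system tractable. Once that count is in hand, invoking Lemma~\ref{lem:thetaord}, tracking the $576$ normalization, and convolving the four truncated eta-quotients are essentially routine verifications.
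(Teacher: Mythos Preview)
Your argument is correct and matches the paper's approach: use Proposition~\ref{prop:lowpoint} and Lemma~\ref{lem:thetaord} to place $\theta^{(\ell+3)/2} h_\ell$ modulo $\ell$ in $S_k(5)$ with vanishing orders at least $r_\infty$ and $r_0$ at the two cusps, then use the basis \eqref{eq:b5def} together with the count $k/2-r_\infty-r_0=3$ to restrict to the four terms $F_{r_\infty+i}$, and finally compute the expansions directly from the eta-quotient form. Two small points of precision: Lemma~\ref{lem:thetaord} only guarantees the orders of $g$ \emph{modulo $\ell$}, so the restriction to four basis elements should be phrased as a congruence (as the paper does via the $q$-expansion principle); and the factor $576=24^2$ is not there to clear Ramanujan--Serre denominators but to normalize the coefficient formula \eqref{eq:thetalowhl} that comes next---this is harmless since $576$ is a unit modulo $\ell$.
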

\begin{proof}
The expansions  \eqref{eq:F_r_m5_expan} can be computed directly.  For the other statement
    we use Proposition~\ref{prop:lowpoint} and \eqref{eq:b5def} to find that for some $\alpha_r\in \Z$ we have
 \begin{gather}\label{eq:lin_combo_all_5}
     576\cdot \theta^{\frac{\ell+3}{2}}\,h_{\ell} \equiv \sum_{r=1}^{(\ell^2+3)/4}\alpha_r F_r \pmod \ell.
 \end{gather}
 By Lemma~\ref{lem:thetaord}, there exists  $g_{\ell}\in S_{(\ell^2-1)/2+4}(5)$ such that $g_{\ell} \equiv \theta^{\frac{\ell+3}{2}}\, h_{\ell} \pmod \ell$ 
 and such that 
 \begin{gather}
\ord_\infty\left(g_\ell\pmod\ell\right)\geq r_\infty,\ \
\ 
\ord_0\left(g_\ell\pmod\ell\right)\geq r_0. 
 \end{gather}
We conclude   that 
\begin{gather}
    g_\ell\equiv \sum_{r=r_\infty}^{r_\infty+3}\alpha_r F_r\pmod\ell;
\end{gather}
 the lower limit is clear from \eqref{eq:b5def} 
 and  the upper limit 
 follows from  the discussion after \eqref{eq:qexp}, the fact that $F_r$ has order $(\ell^2+7)/4-r$ at zero, and the fact that 
  $(\ell^2+7)/4-(r_\infty+3)=r_0$.
\end{proof}

\begin{proof}[Proof of Main Theorem when $m =5$]

Let $\ell \geq 7$ be prime. 
    Computing from \eqref{eq:h_ell_mod_ell}, we find that 
\begin{gather}\label{eq:thetalowhl}
    576\cdot \frac{\theta^{\frac{\ell+3}{2}}\,h_{\ell}(z)}{q^{r_\infty}} \equiv 
    \sum_{n=0}^{5}c\phi_5(n)(24n-5)^2\left(\frac{24(24n-5)}{
    \ell}\right)q^n +O(q^6) \pmod \ell.
\end{gather}
Defining \begin{gather}
   \underline b:=\left(c\phi_5(n)(24n-5)^2\right)_{0\leq n\leq 5}=(25,9025, 277350,  3030075, 20288450, 104160100)
\end{gather}
and
 \begin{gather}
\underline\ep_\ell:=\left(\left(\frac{24(24n-5)}{ \ell}\right)\right)_{0\leq n\leq 5}\in \{0, \pm1\}^6,
\end{gather}
we see that the Hadamard product $\underline b\odot\underline\ep_\ell$ gives the first six coefficients of the function in \eqref{eq:thetalowhl} modulo $\ell$.

 Fix one of the finitely many choices of $\underline\ep=(\ep_0, \dots, \ep_{5})\in \{0, \pm1\}^{6}$. 
Let $E(\underline\ep)$
be the set of $\ell$ such 
that $h_\ell\sl U_\ell\equiv 0\pmod\ell$ and $\underline\ep_\ell=\underline\ep$.
Note that each  $\underline \ep_\ell$ has at most one zero entry because there is at most one solution to $24n -5\equiv 0 \pmod \ell$ for $ 0 \leq n \leq 5$.  So we need only   consider the 256 choices of 
 $\underline\ep$ with at most one zero entry.

    We will show that $E(\underline\ep)$ is empty 
apart from the two values of $\underline\ep$
which lead to the congruences
 \eqref{eq:ram-cong5}.  Using  the matrix of coefficients of the $F_{r_\infty+i}/q^{r_\infty}$ in \eqref{eq:F_r_m5_expan},  we compute the  integer vector $(\alpha_0, \alpha_1, \alpha_2, \alpha_3)^T$ such that
    \begin{gather} \label{eq:sys1_5}
   \begin{pmatrix}
             1 & 0 & 0 & 0 \\
 -10 & 1 & 0 & 0 \\
 35 & -4 & 1 & 0 \\
 -30 & 2 & 2 & 1 \\
        \end{pmatrix}
\begin{pmatrix}
    \alpha_0\\
    \alpha_1\\
    \alpha_2\\
    \alpha_3
\end{pmatrix} = 
\begin{pmatrix}
    25\,\ep_0\\
    9025\,\ep_1\\
    277350\, \ep_2\\
    3030075\, \ep_3
\end{pmatrix} .
    \end{gather}
By Lemma~\ref{lem:lin_combo_5}, we see that
\begin{gather} \label{eq:sys2_5}
\begin{pmatrix}
     -105 & 8 & 5 & 8 \\
 241 & -5 & 10 & 44
\end{pmatrix}  \begin{pmatrix}
    \alpha_0\\
    \alpha_1\\
    \alpha_2\\
    \alpha_3
\end{pmatrix}   \equiv  \begin{pmatrix}
           20288450\,\ep_4\\
           104160100\,\ep_5 
          \end{pmatrix}
          \pmod \ell.
\end{gather}
For each   $\underline\ep$, 
we compute the finite set $S(\underline\ep)$ of primes $\ell$ for which 
\eqref{eq:sys2_5} has solutions.
For $\ell\in S(\underline\ep)$ we then determine that $\underline\ep_\ell\neq \underline\ep$, and so that $E(\underline\ep)=\emptyset$, with only two exceptions:

\begin{gather}\label{eq:eps_7}
    \ell=7, \qquad 
    \underline \ep=\underline \ep_7 = (-1,1,-1,-1,0,1),
    \qquad E(\underline \ep_7)=\{7\},
\end{gather}
\begin{gather}\label{eq:eps_11}
        \ell=11, \qquad 
\underline \ep=\underline \ep_{11} =(1,1,1,-1,-1,-1),
\qquad E(\underline \ep_{11})=\{11\}.
\end{gather} 
We provide an example of these computations below.
This shows that there are no primes 
$\ell \geq 13$ with $h_{\ell}\sl U_{\ell} \equiv 0 \pmod \ell$  and completes the proof when $m=5$.
\end{proof}

\subsection{Examples}
We describe the computations corresponding to the congruences \eqref{eq:ram-cong5}. When $\underline \ep = \ep_7$, we see that \eqref{eq:sys1_5} becomes 
\begin{gather}
\begin{pmatrix}
             1 & 0 & 0 & 0 \\
 -10 & 1 & 0 & 0 \\
 35 & -4 & 1 & 0 \\
 -30 & 2 & 2 & 1 \\
        \end{pmatrix}
 \begin{pmatrix}
    -25\\
    8775\\
    -241375\\
    -2565625
\end{pmatrix} = \begin{pmatrix}
    -25\\
    9025\\
    -277350 \\
    -3030075
\end{pmatrix}, 
    \end{gather}
and that  \eqref{eq:sys2_5} becomes
\begin{gather}
    \begin{pmatrix}
     -105 & 8 & 5 & 8 \\
 241 & -5 & 10 & 44
\end{pmatrix}  \begin{pmatrix}
    -25\\
    8775\\
    -241375\\
    -2565625
\end{pmatrix}
=
\begin{pmatrix}
    -21659050 \\
 -115351150 \\
\end{pmatrix}
\equiv 
  \begin{pmatrix}
           0\\
          104160100 
        \end{pmatrix}
          \pmod \ell.
\end{gather}
We find that 
 $S(\underline\ep_7)$ is  the set of primes $\ell$ 
 which divide 
\begin{gather}
\gcd(21659050,104160100+115351150)=350=2\cdot 5^2\cdot7,
\end{gather}
 from which $E(\underline\ep_7)=\{7\}$.

 Similarly,  when $\underline \ep = \underline \ep_{11}$, \eqref{eq:sys1_5} becomes 
\begin{gather}
\begin{pmatrix}
     1 & 0 & 0 & 0 \\
 -10 & 1 & 0 & 0 \\
 35 & -4 & 1 & 0 \\
 -30 & 2 & 2 & 1 \\
\end{pmatrix}
 \begin{pmatrix}
    25\\
    9275\\
    313575\\
    -3675025
\end{pmatrix} = \begin{pmatrix}
    25\\
    9025\\
    277350\\
    -3030075
\end{pmatrix},
    \end{gather}
and we search for the primes $\ell$ satisfying 
\begin{gather}
    \begin{pmatrix}
     -105 & 8 & 5 & 8 \\
 241 & -5 & 10 & 44
\end{pmatrix}  \begin{pmatrix}
    25\\
    9275\\
    313575\\
    -3675025
\end{pmatrix} 
=\begin{pmatrix}
    -27760750\\
    -158605700
\end{pmatrix}
\equiv 
  \begin{pmatrix}
           -20288450\\
           -104160100
          \end{pmatrix}
          \pmod \ell.
\end{gather}
This occurs  only if $\ell\mid 1100$,  from which we conclude that 
$E(\underline\ep_{11})=\{11\}$.

\section{\texorpdfstring{The case $m=11$}{m=11}}
Although the basic method is the same, the situation here is significantly more  complicated.
A Mathematica notebook with the computations described in this section is available at 
\cite{githubfile}.

\subsection{Construction of bases}
By  \cite[Theorem 3.5.1]{Diamond-Shurman}
we have 
\begin{gather}\label{eq:dim11}
    \dim S_k(11)=k-2\ \ \ \ \text{if $k\geq 4$}.
\end{gather}
Here we outline a construction of explicit bases for $S_k(11)$ which are suited to the computations of the next section; since the construction is somewhat technical we introduce some notation.
For $x\in \{0, 1, 2, 3, 4\}$ we  define 
\begin{gather}
    \label{eq:alphadef}
    \alpha(x):=\begin{cases}
    0&\quad\text{if \ $x=0$,}\\
    2&\quad\text{if \ $x=1$,}\\
    3&\quad\text{if \ $x=2$,}\\
    4&\quad\text{if \ $x=3$,}\\
    6&\quad\text{if \ $x=4$.}\\
    \end{cases}
\end{gather}
and 
\begin{gather}\label{eq:xidef}
    \xi(x):=\alpha(x)-x\in \{0, 1, 2\}.
\end{gather}
For integers $r, k$ we also define 
$\gamma_{r, k}\in \{0, 1, 2, 3, 4\}$ by 
\begin{gather}\label{eq:gammadef}    \gamma(r,k):=2k+r\pmod 5.
\end{gather}
We will prove 
\begin{proposition}\label{prop:basis11}
 Suppose that $k\geq 4$.  
 For $r\geq 1$, define the modular forms $F_r$  by \eqref{eq:frdef}.
 Then a basis for $S_k(11)$ is given by
\begin{gather}
    \begin{aligned}
        &\{F_1, \cdots F_{k-2}\}\quad&\text{if $k\not\equiv 2\pmod 5$,}\\
        &\{F_1, \cdots F_{k-3},  F_{k-1}\}\quad&\text{if $k\equiv 2\pmod 5$.}\\
    \end{aligned}
\end{gather}
Furthermore:
\begin{enumerate}
    \item $F_r$ has integer coefficients,
    \item $F_r$ has order $r$ at infinity,
    \item \label{itm: order_zero} $F_r$ has order $k-r-\xi(\gamma(r,k))$ at zero,
    \item The various orders at zero in (\ref{itm: order_zero})  are all distinct.
\end{enumerate}
\end{proposition}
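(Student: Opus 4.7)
The plan is to verify the four listed properties of the $F_r$ and then deduce the basis statements from the dimension formula \eqref{eq:dim11}. The $F_r$ will be eta-quotients on $\Gamma_0(11)$, and their construction in \eqref{eq:frdef} is engineered so that their orders at the two cusps of $X_0(11)$ take the claimed values.

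First I would apply the standard criterion for eta-quotients \cite[Thm.\ 1.64]{ono_web} to show that each $F_r$ lies in $M_k(11)$ with integer Fourier coefficients; this yields (1) together with the weight statement, and the order at the cusp $\infty$ is then a direct read-off of the leading exponent of $q$ in the eta-quotient expansion, giving (2). For (3), I would compute the order at the cusp $0$ using the standard cusp-valuation formula for $\eta(z)$ and $\eta(11z)$ on $\Gamma_0(11)$. Here the function $\alpha(x)$ in \eqref{eq:alphadef} plays a bookkeeping role: for each residue $\gamma(r,k)\pmod 5$ it selects the minimal shift which both produces integer eta-exponents and removes any would-be pole at $0$, while $\xi(\gamma(r,k))$ records the resulting excess vanishing at $0$.

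Property (4) is then a short enumeration. By (2) and (3), the sum $\ord_\infty(F_r)+\ord_0(F_r)=k-\xi(\gamma(r,k))$ depends only on $\gamma(r,k)\pmod 5$, so within any consecutive block of five values of $r$ a brief case check over the residues $\gamma=0,1,2,3,4$ shows that the orders at $0$ are pairwise distinct; distinctness across blocks follows because increasing $r$ by $5$ decreases $\ord_0(F_r)$ by exactly $5$.

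The main obstacle is the case $k\equiv 2\pmod 5$. A direct computation gives $\gamma(k-2,k)\equiv 4\pmod 5$, so $\xi=2$ and $\ord_0(F_{k-2})=0$; in particular $F_{k-2}\in M_k(11)\setminus S_k(11)$ and must be dropped. One then checks $\gamma(k-1,k)\equiv 0\pmod 5$, whence $\xi=0$ and $\ord_0(F_{k-1})=1$, so $F_{k-1}\in S_k(11)$. In either case the resulting $k-2$ cusp forms have pairwise distinct leading exponents at $\infty$, so they are linearly independent in $S_k(11)$. Comparing with $\dim S_k(11)=k-2$ from \eqref{eq:dim11} closes the argument.
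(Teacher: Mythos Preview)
Your proposal has a genuine gap: you treat the $F_r$ as pure eta-quotients, but the definition \eqref{eq:frdef} reads
\[
F_r(z)=\eta^{\frac{12r-12\gamma(r,k)-k}{5}}(11z)\,
\eta^{2k-\frac{12r-12\gamma(r,k)-k}{5}}(z)\,h_{\gamma(r,k)},
\]
so each $F_r$ is an eta-quotient \emph{times} one of the auxiliary modular functions $h_0,\dots,h_4$ of Lemma~\ref{lem:mod_fct_11}.  This is not a cosmetic difference.  A weight-$k$ eta-quotient on $\Gamma_0(11)$ has the form $\eta^a(11z)\eta^{2k-a}(z)$ with $a\equiv k\pmod{12}$, and its order at $\infty$ is $(10a+2k)/24=(5a+k)/12$; as $a$ varies this order moves in steps of $5$.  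Thus pure eta-quotients in weight $k$ realize only a single residue class of $\ord_\infty$ modulo $5$, giving roughly one-fifth of the $k-2$ forms needed.  The functions $h_x$ (which vanish to order $x$ at $\infty$ and have a pole of order $\alpha(x)$ at $0$) are exactly what fills in the remaining residue classes, and constructing them is the real content of the argument; this is where $X_0(11)$ having genus $1$ enters, since one cannot have a function with a single simple pole.

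Consequently your description of $\alpha(x)$ is off: it is not a bookkeeping shift chosen to make eta-exponents integral, it is the pole order of $h_x$ at the cusp $0$, and $\xi(x)=\alpha(x)-x$ measures the discrepancy between the pole orders of $h_x$ at the two cusps.  Properties (1)--(3) then follow from Lemma~\ref{lem:mod_fct_11} combined with the eta criterion, not from the eta criterion alone.  Your treatment of (4) and of the case split $k\equiv 2\pmod 5$ is fine in outline and matches the paper's approach, but the proof does not get off the ground without first establishing Lemma~\ref{lem:mod_fct_11}.
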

We begin by constructing a family of modular functions which is crucial to the proof.
\begin{lemma}\label{lem:mod_fct_11}
    For $x\in \{0, 1, 2, 3, 4\}$ the function $h_x$ defined in \eqref{eq:hxdef} is a modular function on $\Gamma_0(11)$ which is holomorphic on the upper half-plane and  has integer coefficients, expansion $h_x=q^{-x}+\cdots$ at $\infty$, and a pole of order $\alpha(x)$ at zero.
\end{lemma}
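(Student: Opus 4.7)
The plan is to verify each claimed property of $h_x$ directly from the explicit formula in \eqref{eq:hxdef}, treating the five cases $x\in\{0,1,2,3,4\}$ separately. If $h_x$ is built from eta quotients (possibly combined with other standard modular functions on $\Gamma_0(11)$), then $\Gamma_0(11)$-modularity of $h_x$ is verified by the standard eta-quotient criterion of \cite[Thm.~1.64]{ono_web}; holomorphy on $\H$ is immediate from the nonvanishing of $\eta$ on $\H$ (together with holomorphy of any auxiliary factors). Integer coefficients and the leading expansion $h_x=q^{-x}+\cdots$ at $\infty$ then follow by a direct $q$-expansion via \eqref{eq:etadef}, after checking that the fractional $q$-powers in the product cancel and the leading coefficient equals $1$.

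Next I would compute $\ord_0 h_x$ and verify that it equals $-\alpha(x)$. The natural tool is the Fricke involution $W_{11}$ from \eqref{eq:fricke_involution}, which swaps $\eta(z)\leftrightarrow\eta(11z)$ up to an explicit scalar, so that the leading $q$-power of $h_x\|_0 W_{11}$ at $\infty$ records $\alpha(x)$. For an $\eta$-quotient factor this is encoded by the standard formula $\ord_0\prod_{\delta\mid 11}\eta(\delta z)^{r_\delta}=\tfrac{1}{24}\sum_{\delta\mid 11}(11/\delta)\,r_\delta$, and carrying it out case by case should produce the values $0,2,3,4,6$.

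The main obstacle is the irregular sequence $\alpha(x)=0,2,3,4,6$, in particular the jump that skips the value $5$. This reflects the genus-$1$ geometry of $X_0(11)$: by Abel--Jacobi, a divisor of degree $0$ supported on $\{\infty,0\}$ is principal only when its class vanishes in $J_0(11)(\Q)\cong\Z/5\Z$ (generated by the cuspidal difference). Consequently, pure eta quotients on $\Gamma_0(11)$ can only produce cuspidal pole-order differences that are multiples of $5$; any modular function on $\Gamma_0(11)$ with pole orders exactly $x$ at $\infty$ and $\alpha(x)$ at $0$ must therefore incorporate interior zeros on $\H$ whose class cancels this $5$-torsion obstruction on the Jacobian. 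The construction in \eqref{eq:hxdef} is thus engineered to absorb this obstruction while preserving integrality and hitting the targeted orders, and once the explicit formula is in hand, verification of the four claimed properties reduces to a finite check of $q$-expansions at $\infty$ and at $0$ (the latter via $W_{11}$).
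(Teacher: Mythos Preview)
Your high-level diagnosis is on target: the Fricke involution is the right tool for reading off orders at $0$, and the gap $\alpha(4)=6$ does reflect the genus-one obstruction on $X_0(11)$ (the Weierstrass gap at a single point), so pure eta quotients cannot produce the functions $h_x$. But your proposal never supplies a construction, and your speculation that $h_x$ is ``built from eta quotients (possibly combined with other standard modular functions)'' leads you to invoke the eta-quotient criterion and the formula $\ord_0\prod_\delta\eta(\delta z)^{r_\delta}=\tfrac{1}{24}\sum(11/\delta)r_\delta$, neither of which is actually relevant here. The definition \eqref{eq:hxdef} sits inside the proof precisely because constructing $h_x$ \emph{is} the proof; once you defer to ``once the explicit formula is in hand,'' you have deferred the entire content of the lemma.

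The paper's construction is quite different from what you anticipate. One first writes down two generators $F$ and $G$ of the function field of $X_0(11)$ with known $W_{11}$-behavior: $F$ is $W_{11}$-even and $G$ is $W_{11}$-odd (they are built from a weight-two theta series $g$, the eta product $h=\eta^2(11z)\eta^2(z)$, and a Ramanujan--Serre type derivative). Each $h_x$ is then an explicit polynomial in $F$ and $G$ with rational coefficients, chosen so that the leading term at $\infty$ is $q^{-x}$; applying $W_{11}$ to this polynomial (just flipping the sign of $G$) gives the expansion at $0$ directly, from which $\alpha(x)$ is read off. Integrality of the coefficients is not automatic from this description (the polynomial coefficients have denominators that are powers of $11$) and is verified separately by multiplying $h_x$ by a power of $h$ to land in a space of holomorphic forms with an integral basis. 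Your proposal does not anticipate this last step, and your eta-quotient framework would not have handled it.
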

\begin{proof} We begin by defining two weight two modular forms on $\Gamma_0(11)$:
\begin{align}
g(z)&:=\left(\sum_{x, y\in \Z}q^{x^2+xy+3y^2}\right)^2=1+2q+4q^3+\cdots,\\
h(z)&:=\eta^2(11z)\eta^2(z)=q-2q^2-q^3+\cdots.   
\end{align}
We then define the modular functions
\begin{gather}
    F:=\frac {g^2} h=q^{-1}+6 +17q+\cdots,\qquad G:=\frac{\theta\, F}h=-q^{-2} -2q^{-1} + 12+\cdots.
\end{gather}
These functions (both of which are holomorphic on the upper half-plane) were used in \cite{weston} to determine a model for the elliptic curve $X_0(11)$.
We have 
\begin{gather}\label{eq:F_G_W11}
    F\sl W_{11}=F,\qquad G\sl W_{11}=-G,
\end{gather}
and using this fact we can construct the modular functions $h_x$ with prescribed pole orders at $0$ and infinity by constructing appropriate polynomials in $F$ and $G$.
In particular we define  
\begin{gather}
    \label{eq:hxdef}
\begin{aligned}
    h_0&:=1,\\
    h_1&:=-\tfrac1{121} \left(-\tfrac12 F^2 + 5 F - \tfrac12 G + 11\right)=q+5q^2+\cdots,\\
    h_2&:=-\tfrac1{2662} \left(-88 F + 21 F^2 - F^3 + 11 G - F G\right)=q^2+9q^3\cdots,\\
    h_3&:=\tfrac1{29282} \left(-11 F^3 + F^2 G + 231 F^2 - 21 F G - 726 F + G^2 + 99 G + 
   242\right)\\ 
&\ =q^3+14q^4+\cdots,\\
   h_4&:=h_1h_3=q^4+19q^5+\cdots.
\end{aligned}
\end{gather}
Computations using \eqref{eq:F_G_W11} show that each $h_x$ has order $\alpha(x)$ at zero.  For example, we have
\begin{gather}
    h_2\sl W_{11}=-\tfrac1{2662} \left(-88 F + 21 F^2 - F^3 - 11 G + F G\right)=\frac1{11^3}\left(q^{-3} - 3q^{-2} - 5q^{-1}+\cdots\right).
\end{gather}
The claim that each $h_x$ has integer coefficients can also be verified by computation: the product of $h_x$ and a suitable power of $h$ is a holomorphic modular form of level 11 which can be expressed as an integral linear combination of basis elements with integer coefficients.
\end{proof}

\begin{proof}[Proof of Proposition~\ref{prop:basis11}]
Let $k\geq 4$.  For $r\geq 1$ define 
\begin{gather}\label{eq:frdef}
F_r(z):=\eta^\frac{12r-12\gamma(r,k)-k}5(11z)
            \eta^{2k-\frac{12r-12\gamma(r,k)-k}5}(z) h_{\gamma(r, k)}.
\end{gather} 
It follows from Lemma~\ref{lem:mod_fct_11}  and \cite[Thm. 1.64]{ono_web}) that each $F_r$ transforms correctly in weight $k$ on $\Gamma_0(11)$.  We can also compute directly that 
\begin{gather}\label{eq:Fr_order}
F_r=q^r+\cdots, \qquad F_r\sl W_{11}=q^{k-r-\xi(\gamma(r, k))}+\cdots.
\end{gather}
This establishes assertions (2) and (3), and assertion (1) is clear from Lemma~\ref{lem:mod_fct_11}.

Since $\xi(\gamma(r, k))\in \{0, 1, 2\}$,    we always have
$F_1, \dots, F_{k-3}\in S_k(11)$ from \eqref{eq:Fr_order}.
We also see that 
\begin{gather}
    F_{k-2}\in S_k(11)\iff \xi(\gamma(k-2, k))\neq 2\iff \gamma(k-2, k)\neq 4\iff k\not\equiv2\pmod 5.
\end{gather}
In the case $k\equiv 2\pmod 5$ we have $\gamma(k-1, k)\equiv 0\pmod 5$, so that $\xi(\gamma(k-1, k))=0$.  Thus $F_{k-1}\in S_k(11)$ in this case.  In view of \eqref{eq:dim11} this establishes all of the assertions except for (4).
To prove (4) we assume by way of contradiction that 
\begin{gather}\label{eq:zerocont}
    k-r-\xi(\gamma(r, k))=k-r_1-\xi(\gamma(r_1, k))\ \ \ \text{where $r>r_1$}.
\end{gather}
It follows from \eqref{eq:xidef} that 
\begin{gather}
    0<r-r_1=\xi(\gamma(r_1, k))-\xi(\gamma(r, k))\in \{1, 2\}.
\end{gather}
We can check directly that if  $x, y\in \{0, 1, 2, 3, 4\}$ and
$y\equiv x+1\pmod 5$, then we never have $\xi(x)-\xi(y)=1$.
Similarly, if  $x, y\in \{0, 1, 2, 3, 4\}$ have 
$y\equiv x+2\pmod 5$, then we  have $\xi(x)-\xi(y)\neq 2$.
This shows that \eqref{eq:zerocont} does not occur, which finishes the proof of the proposition.
\end{proof}

\subsection{\texorpdfstring{Proof in the case $m=11$}{proof11}}

Here we have  $r_\infty=11(\ell^2-1)/24$ and $r_0=(
\ell^2-11^2)/24$.  Given the existence of a congruence, we show that the form at the low point of the theta cycle of $h_\ell$ is a linear combination of $9$ of the elements of the basis constructed above.
\begin{lemma}\label{lem:lin_combo}
Suppose that $\ell \geq 13$ is prime and $m =11$. If $h_{\ell}\sl U_{\ell} \equiv 0 \pmod \ell$ then there exist integers $\alpha_{i, \ell}$  such that
\begin{gather}\label{eq:lin_combo_finite11}
    576\cdot \theta^{\frac{\ell+3}{2}}\,h_{\ell} \equiv \sum_{i =0}^{8} \alpha_{i, \ell}F_{r_{\infty}+i} \pmod \ell,
\end{gather}
where $\{F_r\}$ is the basis from Proposition~\ref{prop:basis11} with $k = (\ell^2-1)/2+4$. For $0\leq i\leq 8$, write 
 \begin{gather*}
        \frac{F_{r_\infty+i}}{q^{r_\infty}}=\sum_{n=0}^\infty  c_i(n)q^n=q^i+c_{i}(i+1)q^{i+1}+\cdots.
\end{gather*}
Then the integers $c_n(i)$, $0\leq i\leq 8$, $0\leq n\leq 10$ are independent of the prime $\ell$.
\end{lemma}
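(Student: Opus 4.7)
My plan follows the template of Lemma~\ref{lem:lin_combo_5}. By Proposition~\ref{prop:lowpoint} we have $\omega\left(\theta^{(\ell+3)/2}h_\ell\right)=k$ where $k:=(\ell^2-1)/2+4$, so Lemma~\ref{lem:thetaord} furnishes $g_\ell\in S_k(11)$ with $g_\ell\equiv\theta^{(\ell+3)/2}h_\ell\pmod\ell$ vanishing (modulo $\ell$) to orders at least $r_\infty=11(\ell^2-1)/24$ at $\infty$ and $r_0=(\ell^2-121)/24$ at $0$. I would expand $g_\ell$ in the basis of Proposition~\ref{prop:basis11} and use the distinct orders of the basis elements at each cusp to conclude that the only $F_r$ which can appear are those with $r\geq r_\infty$ and $\ord_0 F_r=k-r-\xi(\gamma(r,k))\geq r_0$. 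A short calculation gives $k-r_0-r_\infty=9$, so the admissible indices $r=r_\infty+i$ are characterized by $0\leq i\leq 9-\xi(\gamma(r_\infty+i,k))$.

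To pin down this index set I would compute $\gamma(r_\infty+i,k)\pmod 5$. Since $\ell\geq 13$ is prime, $\ell^2\equiv 1\pmod{24}$ and $\ell^2\in\{1,4\}\pmod 5$; a direct computation in either case yields $2k+r_\infty\equiv 3\pmod 5$, hence $\gamma(r_\infty+i,k)\equiv 3+i\pmod 5$ uniformly in $\ell$. In particular $\gamma(r_\infty+8,k)=1$ gives $\xi=1=9-8$ (admissible), while $\gamma(r_\infty+9,k)=2$ gives $\xi=1>0=9-9$ (inadmissible); for $i\geq 10$ the bound fails outright. A brief check confirms that $F_{r_\infty},\dots,F_{r_\infty+8}$ all lie in the basis of Proposition~\ref{prop:basis11} for every $\ell\geq 13$ (using $k-2-r_\infty=(\ell^2+47)/24\geq 9$ and the description of the exceptional case $k\equiv 2\pmod 5$), which yields \eqref{eq:lin_combo_finite11}.

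For the second assertion, expanding \eqref{eq:frdef} gives
\begin{gather}
\frac{F_r}{q^r}=\prod_{n\geq 1}(1-q^n)^{b(r,k)}\prod_{n\geq 1}(1-q^{11n})^{a(r,k)}\cdot\frac{h_{\gamma(r,k)}}{q^{\gamma(r,k)}},
\end{gather}
where $a(r,k):=(12r-12\gamma(r,k)-k)/5$ and $b(r,k):=2k-a(r,k)$. Using $2k=\ell^2+7$ and $12r_\infty-k=5\ell^2-9$, I would verify that for $r=r_\infty+i$ with $0\leq i\leq 8$ the exponent $b(r,k)$ takes the $\ell$-independent values $16,16,4,4,4,4,4,-8,-8$. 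Since $\prod(1-q^{11n})^{a(r,k)}\equiv 1\pmod{q^{11}}$, the $\ell$-dependence through $a(r,k)$ does not affect the coefficients of $q^0,\dots,q^{10}$ in $F_{r_\infty+i}/q^{r_\infty+i}$. After the shift by $q^i$ this shows that the coefficients $c_i(n)$ of $q^n$ in $F_{r_\infty+i}/q^{r_\infty}$ for $0\leq n\leq 10$ depend only on the $\ell$-independent data $b(r_\infty+i,k)$, $\gamma(r_\infty+i,k)$, and $i$.

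The main bookkeeping obstacle is the uniform residue $2k+r_\infty\equiv 3\pmod 5$ and the verification that $i=8$ is just barely admissible while $i=9$ is just barely inadmissible; once this boundary analysis is in place, the argument parallels that for the $m=5$ case.
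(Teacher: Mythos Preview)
Your proposal is correct and follows essentially the same approach as the paper's own proof: both invoke Proposition~\ref{prop:lowpoint} and Lemma~\ref{lem:thetaord} to place $\theta^{(\ell+3)/2}h_\ell$ in $S_k(11)$ with the required cuspidal vanishing, then use the distinct cusp orders in Proposition~\ref{prop:basis11} together with the identity $k-r_\infty-r_0=9$ and the uniform residue $\gamma(r_\infty+i,k)\equiv 3+i\pmod 5$ to isolate $i=0,\dots,8$; and both prove $\ell$-independence of the first eleven coefficients by noting that the $\ell$-dependence in \eqref{eq:frdef} sits entirely in the exponent on $\eta(11z)$, which contributes only through $\prod(1-q^{11n})^{\bullet}\equiv 1\pmod{q^{11}}$. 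Your explicit list of the values of $b(r_\infty+i,k)$ and your boundary check at $i=8,9$ are a bit more detailed than the paper's presentation, but the underlying argument is the same.
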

\begin{proof}
The proof of \eqref{eq:lin_combo_finite11} follows from Proposition~\ref{prop:lowpoint}, Proposition~\ref{prop:basis11}, and a consideration of   the orders at zero and infinity of the cusp form $g \equiv \theta^{\frac{\ell+3}{2}}\,h_{\ell}\pmod\ell$ given by Lemma~\ref{lem:thetaord} with $k = (\ell^2-1)/2+4$.
The lower bound in the sum is clear. 
For the upper bound, Proposition~5.1 shows that the largest index $i$ which appears is the largest $i$
for which 
\begin{gather}
    k-(r_\infty+i)-\xi\left(\gamma(r_\infty+i, k)\right)\geq r_0.
\end{gather}
Since $k-r_\infty -r_0 =9$, this is the largest index $i$ with 
 $i + \xi\left(\gamma(r_\infty+i, k)\right) \leq 9$, and a computation shows that this is $i=8$.

To prove the claim about the integers $c_n(i)$ 
we begin by 
noting that $\gamma_i\equiv 3+i\pmod 5$ is independent of $\ell$. 
Define 
\begin{gather}
\beta_i:=\frac{12i-12\gamma_i-4}5\in \Z.
\end{gather}
Then a computation using \eqref{eq:frdef}
shows that 
\begin{gather} \frac{F_{r_\infty+i}}{q^{r_\infty}}  =\prod_{n=1}^\infty\left(1-q^{11}\right)^{\ell^2n}\cdot q^\frac{11}{24}\cdot\eta^{\beta_i-
1}(11z)
    \eta^{8-\beta_i}(z)\, h_{\gamma_i}=q^\frac{11}{24}\cdot \eta^{\beta_i-1}(11z)
    \eta^{8-\beta_i}(z)\, h_{\gamma_i}+O(q^{11}),
\end{gather}
which proves that the first $11$ coefficients are independent of $\ell$ and establishes the lemma.
  \end{proof}
\begin{proof}[Proof of Main Theorem when 
$m = 11$]
Let $\ell \geq 13$.
By a computation using \eqref{eq:h_ell_mod_ell}
 we see that the Hadamard product of the vectors 
\begin{gather}
    \underline b:= \left(c\phi_{11}(n)(24n-11)^2\right)_{0 \leq n \leq 10},\qquad  \underline \ep_{\ell} := \left(\left(\frac{24(24n-11)}{\ell}\right)\right)_{0\leq n \leq 10} \in \{0, \pm 1\}^{11}
\end{gather}
gives the first eleven coefficients of $576\cdot  \theta^{\frac{\ell+3}2}\, h_{\ell}$ modulo $\ell$.

 Fix one of the finitely many choices of $\underline \ep = (\ep_0,\dots, \ep_{10}) \in \{0,\pm 1\}^{10}$. Let $E(\underline \ep)$ be the set of $\ell$ such that $h_{\ell}\sl U_{\ell}\equiv 0 \pmod \ell$ and $\underline \ep_{\ell} = \underline \ep$. As in the last case,  $\underline\ep_\ell$ can have at most one zero entry. Therefore there  are at most 13,312 choices for $\underline \ep$, and we  show that $E(\underline \ep)$ is empty for each of these choices. 
 
 Using the matrix with entries $c_i(n)$ from Lemma~\ref{lem:lin_combo}, we find the integer vector $(\alpha_0,\dots, \alpha_8)^T$ such that
\begin{gather}\label{eq:system1}
\begin{pmatrix}
        1 & 0 & \cdots &0\\
        c_0(1) & \ddots  &\ddots & \vdots\\
        \vdots & \ddots & \ddots & 0 \\
        c_0(8) & \cdots & c_7(8) &1
\end{pmatrix} \begin{pmatrix}
       \alpha_0\\
      \vdots\\
      \alpha_8
 \end{pmatrix}=
 \begin{pmatrix}
       11^2 c\phi_{11}(0) \ep_0\\
       \vdots\\
      181^2 c\phi_{11}(8) \ep_8 
 \end{pmatrix}.
\end{gather}
By \eqref{eq:lin_combo_finite11}, we must have 
\begin{gather}\label{eq:system2}
    \begin{pmatrix}
        c_0(9) \cdots c_8(9)\\
        c_{0}(10) \cdots c_{8}(10)
\end{pmatrix} \begin{pmatrix}
       \alpha_0\\
      \vdots\\
      \alpha_8
\end{pmatrix}\equiv \begin{pmatrix}
205^2c\phi_{11}(9) \ep_9\\
229^2c\phi_{11}(10) \ep_{10} 
   \end{pmatrix}
   \pmod \ell.
\end{gather}
For each choice of $\underline \ep$ we find 
a number of primes $\ell$ which  satisfy \eqref{eq:system2}.  
We then check that none of these primes have 
$\underline \ep_{\ell} = \underline \ep$.
It follows that 
 $E(\underline \ep)=\emptyset$  for all $\underline \ep$.
\end{proof}

\section{\texorpdfstring{The case $m=7$}{m=7}}

When $k\equiv 4\pmod {12}$ we have $\dim\left(S_k(7)\right)=2(k-1)/3-1$.
There is a unique normalized cuspform $f$ in the one-dimensional space $S_4(7)$.
Its expansion is 
$f=q - q^2 - 2q^3 +\cdots$,
and it necessarily vanishes to order one at $0$.
We define 
\begin{gather}\label{eq:b7def}
F_r(z):=f\cdot \eta^{k-4}(7z)\eta^{k-4}(z)\cdot\left(\frac{\eta^4(7z)}{\eta^4(z)}\right)^{r-\frac {k-1}3}
=q^r+\cdots,\qquad 1\leq r \leq \frac{2(k-1)}3-1.
\end{gather}
It is straightforward to check that $F_r\in S_k(7)$ and that $F_r$ has order $2(k-1)/3-r$ at zero.
\begin{proof}[Proof of Main Theorem when $m=7$]
Assume that $\ell \geq 11$ and that $h_\ell \sl U_{\ell} \equiv 0 \pmod \ell$. Arguing as in the  proofs of Lemmas~\ref{lem:lin_combo_5} and ~\ref{lem:lin_combo}, we find there exist integers $\alpha_{i,\ell}$ such that 
\begin{gather}\label{eq:lincombo7}
    576\cdot \theta^{\frac{\ell+3}{2}}\,h_{\ell} \equiv \sum_{i=0}^{4} \alpha_{i,\ell} F_{r_\infty +i} \pmod \ell.
\end{gather}
As before we find that the first $7$ coefficients of $F_{r_\infty+i}/q^{r_\infty}$ appearing in \eqref{eq:lincombo7} are independent of $\ell$; for example we have 
\begin{align}
   \frac{F_{r_\infty}}{q^{r_\infty}}&= 1-5 q+4 q^2+7 q^3+27 q^4-91 q^5-35 q^6+\cdots,\\
   \frac{F_{r_\infty+1}}{q^{r_\infty}}&=q-q^2-2 q^3-7 q^4+16 q^5+2 q^6+\cdots.
\end{align}
In this case, we define 
\begin{gather}
    \underline b:= (c\phi_{7}(n)(24n-7)^2)_{0 \leq n \leq 6}, \qquad 
    \underline \ep_{\ell} :=\left(\left(\frac{24(24n-7)}\ell\right)\right)_{0 \leq n \leq 6} \in \{0, \pm 1\}^7.
\end{gather}
By \eqref{eq:h_ell_mod_ell}, $\underline b\odot\underline\ep_\ell$ gives  the first 7 coefficients of $576 \cdot\theta^{\frac{\ell+3}{2}}\,h_{\ell} $ modulo $\ell$.  For each of 576 possible $\underline \ep = (\ep_0,\dots,\ep_6) \in \{0,\pm 1\}^7$ with at most one zero entry
we find the integer vector $(\alpha_0,\alpha_1,\alpha_2,\alpha_3,\alpha_4)^T$ such that 
\begin{gather}\label{eq:sys17}
    \begin{pmatrix}
    1 & 0 & 0 & 0 & 0 \\
 -5 & 1 & 0 & 0 & 0 \\
 4 & -1 & 1 & 0 & 0 \\
 7 & -2 & 3 & 1 & 0 \\
 27 & -7 & 8 & 7 & 1 \\
\end{pmatrix}
\begin{pmatrix} 
    \alpha_0\\
    \alpha_1\\
    \alpha_2\\
    \alpha_3\\
    \alpha_4
\end{pmatrix} = \begin{pmatrix}
49\ep_0\\
  14161  \ep_1\\
    906059\ep_2\\
    14491750\ep_3\\
    135068892\ep_4
\end{pmatrix}.
\end{gather}
It follows from \eqref{eq:lincombo7} that
\begin{gather} \label{eq:sys27}
    \begin{pmatrix}
      -91 & 16 & 11 & 34 & 11 \\
 -35 & 2 & 25 & 125 & 76 \\
\end{pmatrix}
\begin{pmatrix} 
    \alpha_0\\
    \alpha_1\\
    \alpha_2\\
    \alpha_3\\
    \alpha_4
\end{pmatrix} \equiv \begin{pmatrix}
    906611769 \ep_5\\
    -4905578454 \ep_6
\end{pmatrix} \pmod \ell.
\end{gather}
Analyzing  \eqref{eq:sys27} 
as in the previous cases, we find that the only  $\underline{\ep}$
which leads to  a solution
is $\underline{\ep}_{11}$ (we describe this computation in the next section).
\end{proof}
\subsection{Example}
With 
\begin{gather}
    \underline \ep = \underline \ep_{11} = (-1,1,1,1,-1,-1,-1)
\end{gather}
 we find that \eqref{eq:sys17} is
\begin{gather}
\begin{pmatrix}
    1 & 0 & 0 & 0 & 0 \\
 -5 & 1 & 0 & 0 & 0 \\
 4 & -1 & 1 & 0 & 0 \\
 7 & -2 & 3 & 1 & 0 \\
 27 & -7 & 8 & 7 & 1 \\
\end{pmatrix} \begin{pmatrix}
    -49\\
    13916\\
    920171\\
    11759412\\
    -224647409
\end{pmatrix}
= \begin{pmatrix}
    -49\\
    14161\\
    906059\\
    14491750\\
    -135068892
\end{pmatrix}.
\end{gather}
Then \eqref{eq:sys27} becomes
\begin{gather}
\begin{pmatrix}
      -91 & 16 & 11 & 34 & 11 \\
 -35 & 2 & 25 & 125 & 76 \\
\end{pmatrix} \begin{pmatrix}
    -49\\
    13916\\
    920171\\
    11759412\\
    -224647409
\end{pmatrix} \equiv \begin{pmatrix}
    -906611769\\
    -4905578454
\end{pmatrix} \pmod \ell.
\end{gather}
This can occur only when $\ell\mid 3234$,
which shows that the only possibility is $\ell=11$.
\section{\texorpdfstring{The case $m=13$}{m=13}}
\label{sec:m=13}
Since the techniques used are similar, we will only sketch the proof of the following.

\begin{theorem}
    When $m=13$ there are no congruences of the form \eqref{eq:frob_congruences}.
\end{theorem}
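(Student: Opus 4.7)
The approach follows the pattern of the $m = 5, 7, 11$ cases. For the small primes $\ell \leq 13$, to which Propositions~\ref{prop:kimols} and \ref{prop:lowpoint} do not apply (they require $\ell > m$), I would rule out congruences by direct computation: for each $\ell \in \{5, 7, 11, 13\}$ and each residue class $\beta \pmod{\ell}$, exhibit an integer $n$ with $c\phi_{13}(\ell n + \beta) \not\equiv 0 \pmod \ell$.

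For $\ell \geq 17$, Proposition~\ref{prop:kimols} reduces matters to showing $h_\ell \sl U_\ell \not\equiv 0 \pmod \ell$. Assuming for contradiction that this fails, Proposition~\ref{prop:lowpoint} forces $\theta^{(\ell+3)/2} h_\ell$ to have filtration $(\ell^2-1)/2 + 4$, and Lemma~\ref{lem:thetaord} then produces a cusp form $g \in S_k(13)$ with $k = (\ell^2-1)/2 + 4$ satisfying $g \equiv \theta^{(\ell+3)/2} h_\ell \pmod \ell$ and vanishing to orders at least $r_\infty = 13(\ell^2-1)/24$ at $\infty$ and at least $r_0 = (\ell^2-169)/24$ at $0$.

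The core step is to construct an explicit basis $\{F_r\}$ of $S_k(13)$ with $F_r = q^r + \cdots$ and computable orders at $0$, along the lines of Proposition~\ref{prop:basis11}. Since $X_0(13)$ has genus zero, the construction can be based on a hauptmodul for $\Gamma_0(13)$ (for example a suitable eta-quotient such as $(\eta(z)/\eta(13z))^2$) rather than on the elliptic-curve model of $X_0(11)$. However, $\Gamma_0(13)$ has two elliptic points each of orders $2$ and $3$ (since $13 \equiv 1 \pmod{12}$), so one must introduce a family of correction functions of the sort constructed in Lemma~\ref{lem:mod_fct_11} to realize the finitely many residue classes of $(r, k)$ that force forced-vanishing at those elliptic points. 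A valence-formula count gives $7k/6 - (r_\infty + r_0) = 35/3$, so $g$ is a linear combination of $F_{r_\infty}, F_{r_\infty+1}, \ldots, F_{r_\infty+N}$ for some $N \leq 11$ independent of $\ell$; moreover, the first $N+2$ Fourier coefficients of each $F_{r_\infty+i}/q^{r_\infty}$ depend only on the residue of $r_\infty + i$ modulo a fixed integer and are therefore independent of $\ell$.

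The final step parallels the $m = 11$ case. The first $N+2$ coefficients of $576 \cdot \theta^{(\ell+3)/2} h_\ell \pmod \ell$ are given by the Hadamard product of $\underline b = (c\phi_{13}(n)(24n-13)^2)_{0 \leq n \leq N+1}$ with a Legendre-symbol vector $\underline \ep_\ell = ((24(24n-13)/\ell))_{0\leq n\leq N+1} \in \{0, \pm 1\}^{N+2}$. For each of the finitely many $\underline \ep$ with at most one zero entry, an upper-triangular system determines the coefficients $\alpha_{i,\ell}$ from the first $N+1$ equations, and the remaining equation(s) yield consistency congruences modulo $\ell$ whose solutions form a finite set of candidate primes; one then verifies that none of these candidates actually has $\underline \ep_\ell = \underline \ep$. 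The main obstacle is the increased scale of the computation: the number of sign vectors exceeds $2^{13} + 13 \cdot 2^{12} \approx 61{,}000$, each linear system is of size roughly $12 \times 13$, and the bookkeeping to verify $\ell$-independence of the relevant Fourier coefficients (especially across the several residue classes of $r_\infty + i$ that force different elliptic-point corrections) is appreciably more involved than in the $m = 11$ case.
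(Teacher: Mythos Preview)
Your overall strategy is correct and matches the paper's: dispose of small primes $\ell \leq 13$ by direct computation, and for $\ell \geq 17$ invoke Propositions~\ref{prop:kimols} and \ref{prop:lowpoint} to reduce to an explicit linear-algebra check over finitely many sign vectors. Your count of roughly $61{,}000$ sign vectors is on the mark (the paper reports exactly $61440$), and the window of basis elements $F_{r_\infty}, \ldots, F_{r_\infty+11}$ you predict is what the paper uses.

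Where you diverge from the paper is in the basis construction. You anticipate needing a family of correction functions in the style of Lemma~\ref{lem:mod_fct_11} to handle the four elliptic points of $\Gamma_0(13)$, leading to case distinctions on the residue class of $r$ and extra bookkeeping to verify $\ell$-independence of the Fourier coefficients. The paper avoids all of this by observing that the relevant weight $k = (\ell^2-1)/2 + 4$ always satisfies $k \equiv 4 \pmod{12}$, and for such $k$ a single-formula basis suffices: if $f_4 \in S_4(13)$ is the unique normalized cuspform of order $2$ at $\infty$ fixed by $W_{13}$, then
\[
F_r := f_4 \cdot \bigl(\eta(13z)\eta(z)\bigr)^{k-4} \cdot \left(\frac{\eta^2(13z)}{\eta^2(z)}\right)^{r-2-\frac{7(k-4)}{12}} = q^r + \cdots, \qquad 1 \leq r \leq \tfrac{7k-4}{6}-1,
\]
gives a basis of $S_k(13)$ with $\ord_0(F_r) = \tfrac{7(k-4)}{6} + 4 - r$. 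The factor $f_4$ absorbs the elliptic-point constraints, so no modular-function corrections or residue-class splits are needed, and the $\ell$-independence of the initial coefficients of $F_{r_\infty+i}/q^{r_\infty}$ follows immediately as in the $m=5,7$ cases. Your proposed construction would work, but it is more laborious than necessary; the paper's treatment of $m=13$ is in fact closer in spirit to $m=7$ than to $m=11$.
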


To construct the necessary basis, 
 we 
let $f_4 \in S_4(13)$ be the unique cuspform which has order 2 at $\infty$ and which is fixed by the action of $W_{13}$; we
have 
\begin{gather}
f_4=q^2-2 q^3+q^5-q^6+\cdots.
\end{gather}
For  $k \equiv 4 \pmod{12}$ we show that  $S_k(13)$ has the following basis:
\begin{gather}\label{eq:basis13}
f_4\cdot \left(\eta(13z)\eta(z)\right)^{k-4}\cdot
\left(\frac{\eta^2(13z)}{\eta^2(z)}\right)^{r-2 - \frac{7(k-4)}{12}}= q^r +\cdots, \qquad 1\leq r \leq \frac{7k-4}{6}-1,
\end{gather}
and that  $\ord_0(F_r) = 7(k-4)/6 +4 -r$ for each $r$. Arguing as in the previous cases, we find that the existence of a congruence implies that $\theta^{\frac{\ell+3}{2}}\,h_{\ell} \pmod \ell$ is an $\ell$-integral linear combination of $\{F_{r_{\infty}+i}\}_{i=0}^{11}$. We then check 61440 pairs of linear systems analogous to \eqref{eq:sys1_5} and \eqref{eq:sys2_5} to rule out the existence of a congruence for $\ell > 13$.

\bibliographystyle{plain}
\bibliography{frobenius-congruences}
\end{document}